\newtheorem{thm}{Theorem}[section]
\newtheorem{cor}[thm]{Corollary}
\newtheorem{prop}[thm]{Proposition}
\newtheorem{lem}[thm]{Lemma}
\newtheorem*{op}{Open Problem}
\theoremstyle{definition}
\newtheorem{defn}[thm]{Definition}
\newtheorem{exas}[thm]{Examples}
\newtheorem{rem}[thm]{Remark}
\newcommand{\defeq}{\mathrel{\mathop:}=}
\newcommand{\eqdef}{=\mathrel{\mathop:}}
\newcommand{\B}{\mathbb B}
\newcommand{\Z}{\mathbb Z}
\newcommand{\mcG}{\mathcal G}
\newcommand{\mcO}{\mathcal O}
\newcommand{\mcT}{\mathcal T}
\newcommand{\mcGo}{\mathcal G^{(0)}}
\let\on\operatorname
\let\epsilon\varepsilon
\let\theta\vartheta
\let\phi\varphi
\begin{document}

\title{Congruence-simplicity of Steinberg algebras of non-Hausdorff ample groupoids over semifields}

\author{Tran Giang Nam}
\address{Institute of Mathematics, VAST \\ 18 Hoang Quoc Viet, Cau Giay, Hanoi, Vietnam}
\email{tgnam@math.ac.vn}

\author{Jens Zumbr\"agel}
\address{Faculty of Computer Science and Mathematics \\ University of Passau, Germany}
\email{jens.zumbraegel@uni-passau.de}

\subjclass[2010]{Primary 16S99; Secondary 16Y60, 20L05, 22A22}

\begin{abstract} We investigate the algebra of an ample groupoid, introduced by Steinberg, over a semifield~$S$.  In particular, we obtain a complete characterization of congruence-simpleness for Steinberg algebras of second-countable ample groupoids, extending the well-known characterizations when~$S$ is a field.  We apply our congruence-simplicity results to tight groupoids of inverse semigroup representations associated to self-similar graphs. \medskip

\noindent \textbf{Keywords}: Étale groupoids; Ample groupoids; Congruence-simple semirings; Steinberg algebras.
\end{abstract}

\maketitle

\section{Introduction}

Convolutional algebras over topological groupoids have been a subject of intensive study, \textit{e.g.}, when describing C$^*$-algebras~\cite{r:agatca,p:gisatoa}, inverse semigroup algebras~\cite{s:agatdisa} and Leavitt path algebras~\cite{cfst:aggolpa}.  In particular, so-called Steinberg algebras~\cite{s:agatdisa} are defined over ample groupoids, \textit{i.e.}, étale groupoids with totally disconnected unit space.
Within these investigations, questions on the simplicity of the algebras received quite some attention recently, see, \textit{e.g.}, \cite{bcfs:soaateg,s:spasoegawatisa,cepss:soaatnhg,ss:soisaega}.

While the unit space of the underlying groupoid is usually assumed to be locally compact Hausdorff, the groupoid itself is frequently non-Hausdorff in interesting examples, such as the groupoid of germs of self-similar group actions~\cite{n:goegasa} or of self-similar graphs~\cite{ep:ssgautokanca,cepss:soaatnhg}.
A surprising new phenomenon that occurs in the non-Hausdorff scenario is that simpleness of the Steinberg algebra may actually depend on the ground field.  In fact, it has been observed that the algebra over the groupoid of germs associated to the Grigorchuk group is simple over fields of characteristic zero, but not over the base field $\mathbb F_2$~\cite{n:goegasa,cepss:soaatnhg}.
Lying behind these observations are certain nonzero ideals of “singular functions”, which can only exist in the non-Hausdorff case.

Semirings and semifields have emerged in the literature as natural generalizations of rings and fields in a “non-additive” setting, and found applications in diverse areas such as computer science, theoretical physics and cryptography (see, \textit{e.g.}, the monograph~\cite{g:agttlosataimais} for an overview).
This paper continues the investigation of classes of congruence-simple infinite semirings by Katsov and the authors~\cite{knz:ososacs,knz:solpawcias,nz:osaohagocs}.  In particular, Katsov and the authors have investigated Leavitt path algebras over semirings~\cite{knz:solpawcias}, while the present authors initiated a study of Steinberg algebras of Hausdorff ample groupoids over semirings~\cite{nz:osaohagocs}.
This semiring setup showcases interesting novel attributes of the Steinberg algebras; for example, contrary to the ring case, it turns out that the algebra of a finite inverse semigroup over a semiring is not necessarily isomorphic to its associated Steinberg algebra (see \cite[Prop.~2.12]{nz:osaohagocs}).

As the realm of non-Hausdorff groupoids yields some unexpected behavior and interesting challenges, in this work we suggest an investigation of Steinberg algebras over general ample groupoids.
We provide a classification of congruence-simpleness of such Steinberg algebras in case the underlying groupoid is second-countable.  In doing so, we give a universal property of Steinberg algebras over the Boolean semifield~$\B$ and introduce congruence versions of various ideals of singular functions as introduced by Nekrashevych~\cite{n:goegasa} and by Clark et al~\cite{cepss:soaatnhg}.
We also examine examples given by groupoids of self-similar graphs and related classes, with respect to simplicity of their Steinberg algebras.
Here we should mention that congruence-simple semirings have found applications in constructions of novel semigroup actions for a potential use in public-key cryptosystems (see, \textit{e.g.}, \cite{mmr:pkcbosa}).
In this regard, a fundamental problem is thus to classify congruence-simple semirings, in particular additively idempotent congruence-simple semirings.

The article is organized as follows.  In the subsequent section we introduce the Steinberg algebra over a commutative semiring (Definition~\ref{def:steinberg}) and give some preparatory results; in particular, we establish a universal property of Steinberg algebras over the Boolean semifield~$\B$ (Corollary~\ref{univproperty}).
Section~\ref{sec:simpleness} contains a study of congruence-simple Steinberg algebras, in which we introduce the aforementioned congruences (Lemma~\ref{lem:cong1} and Lemma~\ref{lem:cong2}) and present our main result (Theorem~\ref{thm:simpleness}).
Then in Section~\ref{sec:examples} we apply our results to tight groupoids of inverse semigroup representations and discuss examples coming from self-similar graphs and self-similar group actions.

\section{Steinberg algebras over commutative semirings}

In this section we introduce the Steinberg algebra of an ample groupoid over a commutative semiring.  Originally, the construction of the algebra was given for ample groupoids over commutative rings~\cite{s:agatdisa}.  In our previous work~\cite{nz:osaohagocs} we studied Steinberg algebras of Hausdorff ample groupoids over commutative semirings, while here we do not assume the groupoid to be Hausdorff.

We first briefly recall some basic notions of semirings, semimodules and algebras; for more background information we refer to~\cite{g:sata}.

By a \emph{hemiring} $(R, +, \cdot)$ we mean an algebraic structure such that its additive reduct $(R, +)$ is a commutative monoid with neutral element~$0$, its multiplicative reduct $(R, \cdot)$ is a semiring, and the distributive laws $r (s + t) = r s + r t$, $(r + s) t = r t + s t$ hold for $r, s, t \in R$.
When there is also a neutral element~$1$ with respect to multiplication we refer to a \emph{semiring}; one always has $1 \ne 0$ except in the \emph{trivial} semiring $R = \{ 0 \}$.
If the additive reduct of a hemiring is actually an abelian group then the hemiring is a \emph{ring}, otherwise we speak of a \emph{proper} hemiring.  The hemiring is said to be \emph{commutative} if its multiplication is commutative.
A nontrivial commutative semiring is called a \emph{semifield} if every nonzero element has a multiplicative inverse.  An important example of a proper semifield is the \emph{Boolean semifield} $\B \defeq (\{ 0, 1 \}, \on{or}, \on{and})$.

A \emph{congruence} on a hemiring $(R, +, \cdot)$ is an equivalence relation~$\sim$ on~$R$ such that $r \sim s$ implies $t + r \sim t + s$, $t r \sim t s$, $r t \sim s t$ for all $r, s, t \in R$.  If there exist only the trivial congruences, namely the full one $R \times R$ and the diagonal one $\triangle_R \defeq \{ (r, r) \mid r \in R \}$, then the hemiring is called \emph{congruence-simple}.
As usual, a map $\phi \colon R \to S$ between hemirings~$R$ and~$S$ is called \emph{homomorphism} if $\phi(a + b) = \phi(a) + \phi(b)$, $\phi(a b) = \phi(a) \phi(b)$, $\phi(0) = 0$ holds for all $a, b \in R$; and an \emph{isomorphism} is a bijective homomorphism.
It is not hard to see that a hemiring~$R$ is congruence-simple if and only if every nonzero homomorphism $R \to S$ into a hemiring~$S$ is injective, cf.~\cite[Rem.~2.1]{nz:osaohagocs}.

Let~$S$ be a semiring.  By a (left) \emph{semimodule} $(M, +)$ over~$S$ we mean a commutative monoid with neutral element~$0$, together with an $S$-multiplication $S \times M \to M$, $(s, m) \mapsto s m$ such that $(r + s) m = r m + s m$, $r (m + n) = r m + r n$, $r (s m) = (r s) m$, $r 0 = 0$, $0 m = 0$, $1 m = m$ for all $r, s \in S$, $m, n \in M$.  Homomorphisms and isomorphisms between $S$-semimodules are defined in a standard manner.  An $S$-semimodule is called \emph{free} if it is isomorphic to a direct sum of the canonical $S$-semimodule~$S$.

Now let~$S$ be a commutative semiring.  An \emph{algebra} $(A, +, \cdot)$ over~$S$ is an $S$-semimodule which is also a hemiring such that $(s a) b = s (a b) = a (s b)$ holds for all $s \in S$, $a, b \in A$.
For example, if~$G$ is a group, then the \emph{group algebra}~$S G$ is the free $S$-semimodule $S^{(G)}$ of all functions $f \colon G \to S$ with finite support, where the multiplication $f * g \colon G \to S$ is defined, for $f, g \in S^{(G)}$ and $\gamma \in G$, by the convolution
\[ (f * g)(\gamma) \defeq \sum_{\alpha \beta = \gamma} f(\alpha) g(\beta) \,. \]

Next, we recollect the notion of an ample topological groupoid.
A \emph{groupoid}~$\mcG$ is the set of morphisms in a small category in which every morphism is invertible.  We identify the objects $\mcGo \subseteq \mcG$ with their identity morphisms, called \emph{units}, and denote the source and the range maps by $s, r \colon \mcG \to \mcGo$.
Moreover, let $\mcG^{(2)} \defeq \{ (\alpha, \beta) \in \mcG^2 \mid r(\beta) = s(\alpha) \}$ be the set of composable pairs.

A \emph{topological groupoid} is a groupoid~$\mcG$ equipped with a topology such that the composition map $\mcG^{(2)} \to \mcG$, $(\alpha, \beta) \mapsto \alpha \beta$ and the inverse map $\mcG \to \mcG$, $\gamma \mapsto \gamma^{-1}$ are continuous.
The topological groupoid~$\mcG$ is called \emph{étale} if the unit space $\mcGo$ is locally compact Hausdorff and the source and range maps $s, r \colon \mcG \to \mcGo$ are local homeomorphisms.

An étale groupoid~$\mcG$ is said to be \emph{ample} in case its unit space is totally disconnected.  An ample groupoid has a topological base of compact open bisections~\cite[Prop.~3.6]{s:agatdisa}, where a \emph{bisection} is a subset $U \subseteq \mcG$ such that $s|_U, r|_U$ are homeomorphisms onto the image.
We remark that a compact open bisection is not necessarily closed and thus its characteristic function may be discontinuous.
Also, the intersection of two compact open bisections is not necessarily compact; in fact, this holds if and only if the groupoid is Hausdorff, cf.~\cite[Prop.~3.7]{s:agatdisa}.

Now we are ready to define the Steinberg algebra of an ample groupoid~$\mcG$ over a commutative semiring~$S$.  Denote by $S^{\mcG}$ the $S$-module of all functions $f \colon \mcG \to S$ and by $\mcG^a$ the set of all compact open bisections in~$\mcG$.

\begin{defn}\label{def:steinberg}
Let~$\mcG$ be an ample groupoid and~$S$ a commutative semiring.  The \emph{Steinberg algebra} $A_S(\mcG)$ is defined as the $S$-submodule of~$S^{\mcG}$ generated by the characteristic functions~$1_U$ of compact open bisections~$U$, \textit{i.e.}, \[ A_S(\mcG) \defeq \big\{ \sum_{U \in F} s_U 1_U \mid s_U \in S ,\, F \subseteq \mcG^a \text{ finite} \big\} \,, \]
with multiplication $f * g$, for $f, g \in A_S(\mcG)$ and $\gamma \in \mcG$, given by the convolution \[ (f * g)(\gamma) \defeq \sum_{\substack{r(\beta) = s(\alpha)\\ \alpha \beta = \gamma}} f(\alpha) g(\beta) \,. \]
\end{defn}

Regarding the well-definedness of this convolution and the algebraic properties of the Steinberg algebra, we have the following result.

\begin{prop}[{cf.~\cite[Prop.~4.5, 4.6]{s:agatdisa}}]\label{convoprod}
Let~$\mcG$ be an ample groupoid and~$S$ a commutative semiring.  Then there holds:
\begin{enumerate}[\quad \upshape (1)]
\item $f \ast g \in A_S(\mcG)$ for all $f, g \in A_S(\mcG)$;
\item $1_U \ast 1_V = 1_{U V}$ for any $U, V \in \mcG^a$, thus
  $1_U \ast 1_V = 1_{U \cap V}$ if also $U, V \subseteq \mcGo$;
\item  $1_{U^{-1}}(\gamma) = 1_U(\gamma^{-1})$ for any $U \in \mcG^a$ and $\gamma \in \mcG$;
\item $A_S(\mcG)$, equipped with the convolution, is an $S$-algebra.
\end{enumerate}
\end{prop}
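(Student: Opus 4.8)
The plan is to follow Steinberg's argument for the ring case, verifying at each step that no use of additive inverses is made, so that everything carries over to an arbitrary commutative semiring~$S$. The linchpin is part~(2), from which the remaining assertions follow by bilinearity together with a reduction to characteristic functions.

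I would begin by establishing~(2). Recall first that the product $UV = \{\alpha\beta \mid \alpha \in U,\, \beta \in V,\, s(\alpha) = r(\beta)\}$ of two compact open bisections is again a compact open bisection: the composable pairs in $U \times V$ are parametrized by $s(U) \cap r(V)$, an intersection taken in the \emph{Hausdorff} space $\mcGo$ and hence compact open, so that $UV$ is the continuous image of a compact set and is open by the \'etale structure. Crucially, this intersection lives in the unit space, so the failure of compactness of $U \cap V$ in the non-Hausdorff case is irrelevant here. The key observation is that for $\gamma \in UV$ the factorization $\gamma = \alpha\beta$ with $\alpha \in U$, $\beta \in V$ is \emph{unique}: if $\gamma = \alpha\beta = \alpha'\beta'$, then $r(\alpha) = r(\gamma) = r(\alpha')$ forces $\alpha = \alpha'$ since $r|_U$ is injective, and then $\beta = \alpha^{-1}\gamma = \beta'$. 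Therefore the convolution sum $(1_U \ast 1_V)(\gamma)$ has at most one nonzero summand, namely $1 \cdot 1 = 1$, giving $1_U \ast 1_V = 1_{UV}$. This is exactly the point where the semiring argument coincides with the ring one: uniqueness of factorization means one never has to interpret a sum $1 + 1$, so no cancellation or subtraction is required. The special case $U, V \subseteq \mcGo$ yields $UV = U \cap V$.

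Next I would treat well-definedness and~(1). For $f, g \in A_S(\mcG)$ the supports lie in finite unions of compact open bisections; for a fixed $\gamma$ the same uniqueness-of-factorization argument, applied to each pair of such bisections, shows that only finitely many summands $f(\alpha) g(\beta)$ are nonzero, so the convolution is well-defined. The convolution is visibly $S$-bilinear by distributivity in~$S$, and these finite sums may be freely rearranged. Writing $f = \sum_U s_U 1_U$ and $g = \sum_V t_V 1_V$, bilinearity together with~(2) gives $f \ast g = \sum_{U,V} s_U t_V\, 1_{UV} \in A_S(\mcG)$, proving~(1). Part~(3) is then immediate from the definitions, using that $U^{-1}$ is again a compact open bisection: $1_{U^{-1}}(\gamma) = 1$ iff $\gamma \in U^{-1}$ iff $\gamma^{-1} \in U$ iff $1_U(\gamma^{-1}) = 1$.

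Finally, for~(4) it remains to verify the hemiring and $S$-algebra axioms. Closure under addition and $S$-scaling, and the commutative-monoid structure, are inherited from $S^{\mcG}$; distributivity of $\ast$ over $+$ and the compatibility $(s a) b = s (a b) = a (s b)$ follow from bilinearity and commutativity of~$S$. For associativity I would once more reduce, via bilinearity, to characteristic functions and compute $(1_U \ast 1_V) \ast 1_W = 1_{(UV)W} = 1_{U(VW)} = 1_U \ast (1_V \ast 1_W)$, using that the set-theoretic product of bisections is associative. I expect the only genuinely delicate point to be the verification that $UV$ is compact open in the non-Hausdorff setting; once this is secured, the absence of additive inverses causes no difficulty, precisely because the unique-factorization property keeps every relevant convolution coefficient equal to a single product rather than a genuine sum.
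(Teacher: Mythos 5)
Your proposal is correct and follows essentially the same route as the paper, which simply notes that items (1)--(3) carry over from Steinberg's original argument (cf.~\cite[Prop.~4.5]{s:agatdisa}) and that (4) reduces to associativity, verified via item (2) on characteristic functions. Your write-up fleshes out exactly those cited steps --- unique factorization in products of bisections (so no additive inverses are ever needed), bilinearity, and the reduction of associativity to $1_{(UV)W} = 1_{U(VW)}$ --- so it is a detailed version of the paper's proof rather than a different one.
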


\begin{proof}
Items (1) to (3) are proved similarly as in the proof of \cite[Prop.~4.5]{s:agatdisa}.  For item (4), it is sufficient to show the associativity of convolution.  However, this is a straightforward by using item (2) (or the proof of \cite[Prop.~2.4]{r:tgatlpa}).
\end{proof}

Note that the Steinberg algebra $A_S(\mcG)$ in general does not have an identity element.  In fact, it is unital if and only if the unit space $\mcGo$ is compact, in which case the identity element is given by $1_{\mcGo}$, cf.~\cite[Prop.~4.11]{s:agatdisa}.

In the following we study Steinberg algebras over additively idempotent commutative semirings and over the Boolean semifield in more detail.

\begin{lem}\label{expresslem}
Let~$\mcG$ be an ample groupoid with a base~$\mathcal B$ of compact open bisections, and $S$ an additively idempotent commutative semiring.  Then $A_S(\mcG) = \on{Span}_S\{ 1_U \mid U \text{ is compact open in } \mcG\} = \on{Span}_S \{ 1_B \mid B \in \mathcal B \}$.  If moreover the semiring~$S$ is the Boolean semifield~$\B$, then
\[ A_{\B}(\mcG) = \{ 1_U \mid U \text{ is compact open in } \mcG \} \,. \]
\end{lem}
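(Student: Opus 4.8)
The plan is to prove the chain of inclusions
\[ \on{Span}_S\{ 1_B \mid B \in \mathcal B \} \subseteq A_S(\mcG) \subseteq \on{Span}_S\{ 1_U \mid U \text{ compact open} \} \subseteq \on{Span}_S\{ 1_B \mid B \in \mathcal B \} \,, \]
from which all three asserted equalities follow at once. The first inclusion is immediate because $\mathcal B \subseteq \mcG^a$, and the middle one holds because every compact open bisection is in particular a compact open set, while by Definition~\ref{def:steinberg} we have $A_S(\mcG) = \on{Span}_S\{ 1_U \mid U \in \mcG^a \}$. So only the third inclusion carries content.

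The decisive ingredient, and the place where additive idempotency of~$S$ is used, is the elementary observation that for \emph{any} two subsets $U, V \subseteq \mcG$ one has $1_U + 1_V = 1_{U \cup V}$ as $S$-valued functions. Indeed, evaluating at a point $\gamma \in \mcG$ gives~$0$ when $\gamma \notin U \cup V$, and otherwise a sum of one or two copies of $1 \in S$, which collapses to~$1$ since $1 + 1 = 1$ by additive idempotency. An induction then yields $1_{U_1} + \cdots + 1_{U_n} = 1_{U_1 \cup \cdots \cup U_n}$ for any finite family. This identity is exactly what replaces the inclusion--exclusion formula (and hence the subtraction) that one would invoke over a ring; I expect it to be the only step requiring genuine care, though it is not really an obstacle.

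With this identity the third inclusion reduces to a routine compactness argument. Given a compact open set~$U$, I would use that $\mathcal B$ is a base to cover~$U$ by base elements contained in~$U$, extract a finite subcover $U = B_1 \cup \cdots \cup B_n$ with $B_i \in \mathcal B$, and conclude $1_U = 1_{B_1} + \cdots + 1_{B_n} \in \on{Span}_S\{ 1_B \mid B \in \mathcal B \}$ by the observation above. This closes the cycle and establishes the three equalities.

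For the final claim I would specialize to $S = \B$. A general element of $A_{\B}(\mcG)$ is a finite combination $\sum_i s_i 1_{U_i}$ with coefficients $s_i \in \{ 0, 1 \}$ and each $U_i$ compact open; discarding the terms with $s_i = 0$ and summing the remaining ones, the same idempotency identity rewrites it as $1_{\bigcup_i U_i}$, and a finite union of compact open sets is again compact open (the empty set covers the zero function). Conversely, the first part already shows that every such $1_U$ lies in $A_{\B}(\mcG)$. Hence $A_{\B}(\mcG) = \{ 1_U \mid U \text{ compact open in } \mcG \}$.
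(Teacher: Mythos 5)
Your proposal is correct and follows essentially the same route as the paper: the same chain of inclusions, the same use of additive idempotency to write $1_{B_1} + \cdots + 1_{B_k} = 1_{B_1 \cup \cdots \cup B_k}$, the same compactness/base argument for the nontrivial inclusion, and the same collapsing of $\B$-linear combinations to a single characteristic function of a finite union of compact open sets. Your write-up merely makes explicit a few points the paper leaves implicit (the finite-subcover extraction and the fact that finite unions of compact open sets are compact open), which is fine.
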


\begin{proof} Obviously, there holds
\[ \on{Span}_S \{ 1_B \mid B \in \mathcal B \} \subseteq A_S(\mcG) \subseteq\on{Span}_S \{ 1_U \mid U \text{ is compact open in } \mcG \} . \]
Let~$U$ be a compact open subset of~$\mcG$.  Since~$\mathcal B$ is a base for the topology on~$\mcG$, there exist $B_1, \dots, B_k \in \mathcal B$ such that $U = B_1 \cup \ldots \cup B_k$.  Because the semiring~$S$ is additively idempotent, we readily obtain that
\[ 1_U = 1_{B_1 \cup \ldots \cup B_k} = 1_{B_1} + \ldots + 1_{B_k} \in \on{Span}_S \{ 1_B \mid B \in \mathcal B \} . \]
This shows the first statement of the lemma.  Now if the semiring~$S$ is the Boolean semifield~$\B$, it is clear that $\on{Span}_{\B} \{ 1_U \mid U \text{ is compact open in } \mcG \}$ equals the set $\{ 1_U \mid U \text{ is compact open in } \mcG \}$, thus concluding the proof.
\end{proof}

\begin{rem}\label{SAg-Bool-rem}
Let~$\mcG$ be an ample groupoid and let~$S$ be an additively idempotent commutative semiring.
  
(1) For all compact open subsets~$U$, $V$ of~$\mcG$ there holds \[ 1_U * 1_V = 1_{U V} \,, \] extending Proposition~\ref{convoprod} (2).
Indeed, by additive idempotency, for $\gamma \in \mcG$ the value $(1_U * 1_V)(\gamma) = \sum_{\alpha \beta = \gamma} 1_U(\alpha) 1_V(\beta)$ equals~$1$ iff there exists $\alpha \in U$ and $\beta \in V$ with $r(\beta) = s(\alpha)$ and $\alpha \beta = \gamma$, \textit{i.e.}, iff $\gamma \in U V$.

(2) If moreover the semiring~$S$ is the Boolean semifield~$\B$, there is a natural bijective correspondence between the Steinberg algebra $A_{\B}(\mcG)$ and the collection of all compact open subsets of~$\mcG$, given by $1_U \mapsto U$ where $U \subseteq \mcG$ is a compact open subset, cf.\ Lemma~\ref{expresslem}.
Under this bijection the Steinberg algebra operations correspond to the set operations \[ U + V \defeq U \cup V \,, \qquad U * V \defeq U V \] for any compact open subsets~$U$, $V$ of~$\mcG$.
\end{rem}

The following result provides us with a universal property of Steinberg algebras over the Boolean semifield~$\B$.

\begin{cor}\label{univproperty}
Let~$\mcG$ be an ample groupoid, and let~$B$ be a $\B$-algebra containing a family of elements $\{t_U \mid U \text{ is a compact open subset of } \mcG \}$ satisfying:
\begin{enumerate}[\quad \upshape (1)]
\item $t_{\varnothing} = 0$;
\item $t_U + t_V = t_{U \cup V}$ and $t_U \ast t_V = t_{U V}$ for all compact open subsets~$U$ and~$V$.
\end{enumerate}
Then there is a unique $\B$-algebra homomorphism $\pi \colon A_{\B}(\mcG) \longrightarrow B$ such that $\pi(1_U) = t_U$ for all compact open subsets~$U$.
\end{cor}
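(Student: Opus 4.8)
The plan is to exploit the explicit description of $A_{\B}(\mcG)$ from Lemma~\ref{expresslem} together with the bijective correspondence of Remark~\ref{SAg-Bool-rem}(2). Recall that every element of $A_{\B}(\mcG)$ is of the form $1_U$ for a \emph{unique} compact open subset $U \subseteq \mcG$, since $U$ is recovered as the support of $1_U$. Hence I would simply \emph{define} $\pi(1_U) \defeq t_U$ for each compact open~$U$; because of this uniqueness there is no ambiguity, so the map $\pi \colon A_{\B}(\mcG) \to B$ is well-defined. Equivalently, $\pi$ is the composition of the bijection $A_{\B}(\mcG) \to \{\, U \subseteq \mcG \mid U \text{ compact open} \,\}$, $1_U \mapsto U$, with the assignment $U \mapsto t_U$.

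It then remains to check that this $\pi$ is a $\B$-algebra homomorphism. First, the zero of $A_{\B}(\mcG)$ is the characteristic function $1_{\varnothing}$, so $\pi(0) = \pi(1_{\varnothing}) = t_{\varnothing} = 0$ by hypothesis~(1). For additivity, I would use that $1_U + 1_V = 1_{U \cup V}$ (Remark~\ref{SAg-Bool-rem}(2)) together with $t_U + t_V = t_{U \cup V}$ from~(2), giving $\pi(1_U + 1_V) = \pi(1_{U \cup V}) = t_{U \cup V} = t_U + t_V = \pi(1_U) + \pi(1_V)$. Multiplicativity is entirely analogous, using $1_U * 1_V = 1_{U V}$ from Remark~\ref{SAg-Bool-rem}(1) and $t_U * t_V = t_{U V}$ from~(2). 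Finally, since the only scalars in~$\B$ are $0$ and $1$, compatibility with the $\B$-action is automatic: $\pi(0 \cdot f) = 0 = 0 \cdot \pi(f)$ and $\pi(1 \cdot f) = \pi(f) = 1 \cdot \pi(f)$. Thus $\pi$ is a $\B$-algebra homomorphism with $\pi(1_U) = t_U$ for all compact open~$U$.

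For uniqueness, I would observe that any $\B$-algebra homomorphism $\pi'$ satisfying $\pi'(1_U) = t_U$ for all compact open~$U$ must coincide with~$\pi$, because every element of $A_{\B}(\mcG)$ is of the form $1_U$; there is thus nothing further to specify. I do not expect any genuine obstacle here: once Lemma~\ref{expresslem} and Remark~\ref{SAg-Bool-rem} are in place, the statement is essentially a transport of the set operations $\cup$ and product of subsets through the canonical bijection. The only point requiring a moment's care is the well-definedness of~$\pi$, i.e.\ that assigning $t_U$ to $1_U$ is unambiguous, which rests precisely on the fact that a compact open subset~$U$ is determined by its characteristic function.
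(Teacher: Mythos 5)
Your proposal is correct and follows essentially the same route as the paper's own proof: both invoke Lemma~\ref{expresslem} to write every element of $A_{\B}(\mcG)$ as $1_U$ for a compact open~$U$, define $\pi(1_U) \defeq t_U$, and verify the homomorphism property via Remark~\ref{SAg-Bool-rem}. The only difference is that you spell out the details (well-definedness via supports, the zero element, additivity, multiplicativity, $\B$-linearity, uniqueness) that the paper compresses into the word ``straightforward.''
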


\begin{proof} By Lemma~\ref{expresslem}, we have \[ A_{\B}(\mcG) = \{ 1_U \mid U \text{ is compact open in } \mcG \} . \]
Using Remark~\ref{SAg-Bool-rem} (2), it is straightforward to show that the map $\pi \colon A_{\B}(\mcG) \longrightarrow B$, $1_U \longmapsto t_U$ for any compact open subset $U \subseteq \mcG$, is a $\B$-algebra homomorphism, thus finishing the proof. \end{proof}

We conclude this section by computing Steinberg algebras of ample groupoids with finite unit space, extending~\cite[Prop.~3.1]{s:ccoegawatlpaaisa} to commutative semirings.
Before doing so, we need to fix some useful notations for groupoids.  Let~$\mcG$ be a groupoid and let $D$, $E$ be subsets of $\mcGo$.  Define
\[ \mcG_D \defeq \{ \gamma \in \mcG \mid s(\gamma) \in D \} \,,\ \mcG^E \defeq \{\gamma \in \mcG \mid r(\gamma) \in E \} ~\text{ and }~ \mcG_D^E \defeq \mcG_D \cap \mcG^E \,. \]
For $u, v \in \mcGo$ we also briefly denote $\mcG_u \defeq \mcG_{\{ u \}}$, $\mcG^v \defeq \mcG^{\{ v \}}$ and $\mcG_u^v \defeq \mcG_u \cap \mcG^v$.
For a unit $u \in \mcGo$ the group $\mcG_u^u = \{ \gamma \in \mcG \mid s(\gamma) = u = r(\gamma) \}$ is called its \emph{isotropy group}.  The \emph{isotropy subgroupoid} of~$\mcG$ is $\on{Iso}(\mcG) \defeq \bigcup_{u \in \mcGo} \mcG^u_u$.

Consider the equivalence relation on~$\mcGo$, defined by $u \sim v$ if there exists some $g \in \mcG$ such that $s(g) = u$ and $r(g) = v$, whose equivalence classes are called \emph{orbits}.
It is well-known that, up to conjugation in~$\mcG$ (and hence isomorphism), the isotropy group of~$u$ depends only on the orbit of~$u$. Thus we may speak unambiguously of the isotropy group of the orbit.

\begin{prop}
Let~$S$ be a commutative semiring and~$\mcG$ an ample groupoid with~$\mcGo$ finite.  Let $\mcO_1, \dots, \mcO_k$ be the orbits of~$\mcGo$, with~$\mcO_i$ of size $n_i \defeq \vert \mcO_i \vert$ and having isotropy group~$G_i$, for $i = 1, \dots, k$.  Then there holds \[ A_S(\mcG) \,\cong\, \prod^k_{i=1} M_{n_i}(S G_i) \,, \]
where $M_{n_i}(S G_i)$ denotes the $n_i \!\times\! n_i$-matrices over the group algebra $S G_i$.
\end{prop}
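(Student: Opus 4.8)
The plan is to exploit that a finite unit space forces the groupoid to be discrete, so that $A_S(\mcG)$ is simply the groupoid semiring of finitely supported functions; the asserted product of matrix algebras then follows from the classical structure theory of finite groupoids, which I will carry out explicitly on basis elements in order to avoid any appeal to additive inverses.

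First I would check that $\mcG$ is discrete. Since $\mcGo$ is finite and Hausdorff it is discrete, and as $s \colon \mcG \to \mcGo$ is a local homeomorphism onto a discrete space, every $\gamma \in \mcG$ has an open neighbourhood mapped homeomorphically onto the open singleton $\{ s(\gamma) \}$; hence $\{ \gamma \}$ is open. Consequently each singleton is a compact open bisection, and for any finitely supported $f \colon \mcG \to S$ one has $f = \sum_{\gamma} f(\gamma)\, 1_{\{ \gamma \}}$, a finite sum in which at each point of $\mcG$ at most one summand is nonzero. Thus $A_S(\mcG)$ is the free $S$-semimodule with basis $\{ 1_{\{ \gamma \}} \mid \gamma \in \mcG \}$, and by Proposition~\ref{convoprod}(2) its multiplication is governed by $1_{\{ \alpha \}} \ast 1_{\{ \beta \}} = 1_{\{ \alpha \beta \}}$ when $(\alpha, \beta) \in \mcG^{(2)}$ and $=0$ otherwise; that is, $A_S(\mcG)$ is the groupoid semiring of $\mcG$ over $S$. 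Next I would separate the orbits: each $\mcO_i$ is clopen in the discrete $\mcGo$, so $\mcG_{\mcO_i}^{\mcO_i}$ is a clopen subgroupoid and $\mcG = \bigsqcup_{i=1}^k \mcG_{\mcO_i}^{\mcO_i}$, since a morphism can only join units lying in one and the same orbit. This disjoint union splits the basis into blocks, and because $f \ast g = 0$ whenever $f, g$ are supported on distinct blocks (composability would force $s(\alpha) = r(\beta)$ with the two sides in different orbits), the blocks are mutually annihilating two-sided ideals. Hence $A_S(\mcG) \cong \prod_{i=1}^k A_S(\mcG_{\mcO_i}^{\mcO_i})$ as $S$-algebras, and it remains to handle a single transitive orbit.

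So assume $\mcG$ is transitive with $\mcGo = \{ u_1, \dots, u_n \}$ and isotropy group $G \defeq \mcG_{u_1}^{u_1}$. Fixing for each $j$ a morphism $\gamma_j \in \mcG_{u_1}^{u_j}$ with $\gamma_1 = u_1$, every $\gamma \in \mcG$ with $s(\gamma) = u_j$ and $r(\gamma) = u_i$ admits a unique expression $\gamma = \gamma_i\, g\, \gamma_j^{-1}$ with $g \in G$. I would then define an $S$-linear map $A_S(\mcG) \to M_n(S G)$ on the basis by $1_{\{ \gamma \}} \mapsto g\, E_{ij}$, where $E_{ij}$ is the standard matrix unit. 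Uniqueness of the decomposition makes $\gamma \mapsto (i, g, j)$ a bijection, so this map carries the basis $\{ 1_{\{ \gamma \}} \}$ onto the basis $\{ g\, E_{ij} \mid g \in G,\, 1 \le i, j \le n \}$ of the free $S$-module $M_n(S G)$, and is therefore an $S$-semimodule isomorphism.

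The only remaining point, which I expect to be the main thing requiring care, is multiplicativity, and it is precisely here that the semiring setting must be respected. It reduces to matching the rule $1_{\{ \gamma \}} \ast 1_{\{ \delta \}} = 1_{\{ \gamma \delta \}}$ (or $0$) from Proposition~\ref{convoprod}(2) against the matrix-unit rule: writing $\gamma = \gamma_i\, g\, \gamma_j^{-1}$ and $\delta = \gamma_k\, h\, \gamma_l^{-1}$, the product $\gamma \delta$ is defined exactly when $u_j = u_k$, in which case $\gamma_j^{-1} \gamma_k = \gamma_j^{-1}\gamma_j$ is the unit at $u_1$ and $\gamma \delta = \gamma_i\, (gh)\, \gamma_l^{-1}$; correspondingly $(g\, E_{ij})(h\, E_{kl})$ vanishes unless $j = k$, in which case it equals $(gh)\, E_{il}$. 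This index bookkeeping is not deep, but it is the step where one verifies that the whole argument proceeds on the distinguished basis $\{ 1_{\{ \gamma \}} \}$, keeping it free of the additive cancellation that the usual ring-theoretic proof via idempotents and a Peirce decomposition would invoke.
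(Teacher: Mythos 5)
Your proposal is correct and follows essentially the same route as the paper's proof: finiteness of $\mcGo$ forces $\mcG$ to be discrete, so $A_S(\mcG)$ is the free $S$-semimodule on $\{1_{\{\gamma\}} \mid \gamma \in \mcG\}$ with product extending that of~$\mcG$; the orbits then split the algebra as a direct product, and each transitive block is identified with $M_{n_i}(S G_i)$ via exactly the same choice of connecting morphisms $\gamma_j$ and the assignment $1_{\{\gamma\}} \mapsto (\gamma_i^{-1}\gamma\gamma_j)E_{ij}$. The only difference is that you verify explicitly (the free-basis description and the multiplicativity bookkeeping) what the paper handles by citation and by ``it is not hard to see,'' which is a welcome level of detail but not a different argument.
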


\begin{proof}
The proof is essentially based on the one of \cite[Prop.~3.1]{s:ccoegawatlpaaisa}.  Since the unit space~$\mcGo$ is both Hausdorff and finite, it is discrete.
Then, since~$s$ is a local homeomorphism, also $\mcG$~is discrete.  By \cite[Ex.~2.8 (3)]{nz:osaohagocs}, $A_S(\mcG)$ is exactly the free $S$-algebra having basis~$\mcG$ and whose product extends that of~$\mcG$, where we interpret undefined products as~$0$.
For every~$i$, let $\mcG_i \defeq s^{-1}(\mcO_i) = r^{-1}(\mcO_i)$.  We then have that~$\mcG_i$ is an ample subgroupoid of~$\mcG$ and $\mcG = \mcG_1 \sqcup \dots \sqcup \mcG_k$, hence $A_S(\mcG) \cong \prod^k_{i=1} A_S(\mcG_i)$.

Now it suffices to show that $A_S(\mcG_i) \cong M_{n_i}(S G_i)$.  To this end, we fix an element $u_i \in \mcO_i$, and choose, for each $v \in \mcO_i$, an element $g_v\in \mcG_i$ such that $s(g_v) = u_i$ and $r(g_v) = v$.
Notice that $G_i \cong \mcG^{u_i}_{u_i}$ as groups.  We define a map $\phi \colon A_S(\mcG_i) \longrightarrow M_{n_i}(SG_i)$ by $\phi(g) \defeq g^{-1}_{r(g)}g g_{s(g)} E_{r(g) s(g)}$ for all $g \in \mcG_i$, where we index
the rows and columns of matrices by $\mcO_i$, and  $E_{r(g) s(g)}$ is the matrix unit in $M_{n_i}(SG_i)$.  It is not hard to see that~$\phi$ is an isomorphism, as desired.
\end{proof}

\section{Congruence-simpleness of Steinberg algebras}\label{sec:simpleness}

The main goal of this section is to present a description of the congruence-simple Steinberg algebras $A_S(\mcG)$ of second-countable ample groupoids $\mcG$ over a semifield~$S$, which extends the well-known description when
the ground semifield~$S$ is a field (see \cite[Th.~3.14]{cepss:soaatnhg} and \cite[Th.~4.16]{ss:soisaega}).

We begin by recalling some important notations for a groupoid~$\mcG$.
A subset~$D$ of the unit space~$\mcGo$ is called \emph{invariant} if $s(\gamma) \in D$ implies $r(\gamma) \in D$ for all $\gamma \in \mcG$; equivalently, $D = \{ r(\gamma) \mid s(\gamma)\in D\} = \{s(\gamma) \mid r(\gamma) \in D\}$.  Also, $D$ is invariant if and only if its complement is invariant.  Moreover, let $\mcT$ denote the set of units with trivial isotropy group; this set $\mcT$ is invariant.  The groupoid~$\mcG$ is said to be \emph{principal} if $\mcT = \mcGo$, \textit{i.e.}, if $\on{Iso}(\mcG) = \mcGo$.

\begin{defn}[{\cite[Def.~2.1]{bcfs:soaateg}}]
Let~$\mcG$ be an ample groupoid.  We say that~$\mcG$ is \emph{minimal} if~$\mcGo$ has no nontrivial open invariant subsets.  We say that~$\mcG$ is \emph{topologically principal} if the set $\mcT$ is dense in $\mcGo$, and we call~$\mcG$ \emph{effective} if the interior of $\on{Iso}(\mcG)$ is equal to $\mcGo$.
\end{defn}

Notice that effective groupoids are related to topologically principal groupoids.  Any effective second-countable ample groupoid is topologically principal (see \cite[Prop.~3.6]{r:csica}).  Conversely, any Hausdorff ample groupoid being topologically principal is effective (see~\cite[Lem.~3.1]{bcfs:soaateg}).  However, this fact is not generally true in the non-Hausdorff case (see \cite[Lem.~5.1]{cepss:soaatnhg}).

We now describe necessary conditions for Steinberg algebras of ample groupoids over semifields to be congruence-simple, the argument following the proof of \cite[Prop.~3.2]{nz:osaohagocs}.

\begin{prop}\label{Neccondprop}
If the Steinberg algebra $A_S(\mcG)$ of an ample groupoid~$\mcG$ over a semifield~$S$ is congruence-simple, then there holds:
\begin{enumerate}[\quad \upshape (1)]
\item $S$~is either a field or the Boolean semifield $\B$;
\item $\mcG$~is both minimal and effective.
\end{enumerate}
\end{prop}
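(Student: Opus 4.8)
The plan is to establish the three assertions separately, in each case using the characterization recalled in the excerpt that a hemiring is congruence-simple if and only if every nonzero homomorphism out of it is injective (equivalently, that it carries no nontrivial congruence). Since a congruence-simple hemiring is nontrivial and $A_S(\mcG) = 0$ when $\mcG = \varnothing$, I may assume $\mcG \neq \varnothing$ throughout. For~(1) I would first reduce to congruence-simpleness of~$S$ itself and then quote the classification of congruence-simple commutative semifields as precisely the fields and~$\B$. The reduction proceeds by lifting: given a congruence~$\sim$ on~$S$, define $f \approx g$ on $A_S(\mcG)$ by $f(\gamma) \sim g(\gamma)$ for all $\gamma \in \mcG$. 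As each convolution value $(f \ast h)(\gamma) = \sum_{\alpha\beta=\gamma} f(\alpha) h(\beta)$ is a finite $S$-combination, one checks directly that $\approx$ is a congruence on $A_S(\mcG)$. Fixing a nonempty compact open bisection~$U$ and elements $a \sim b$ with $a \neq b$, the functions $a 1_U$ and $b 1_U$ show that $\approx$ is not the diagonal; and were $\approx$ the full congruence, then $1_U \approx 0$ would force $1 \sim 0$, hence $\sim$ full. Thus a nontrivial congruence on~$S$ produces one on $A_S(\mcG)$, giving the reduction.

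For the minimality in~(2), suppose $D \subseteq \mcGo$ is a nonempty proper open invariant subset and set $C \defeq \mcGo \setminus D$, a nonempty closed invariant set. Then $\mcG_C = s^{-1}(C)$ is a closed ample subgroupoid, and restriction $\rho \colon A_S(\mcG) \to A_S(\mcG_C)$, $f \mapsto f|_{\mcG_C}$, is a surjective homomorphism: invariance of~$C$ forces every factorization $\gamma = \alpha\beta$ of a $\gamma \in \mcG_C$ to satisfy $\alpha, \beta \in \mcG_C$, so $\rho$ respects convolution. Since $C \neq \varnothing$ we have $A_S(\mcG_C) \neq 0$, so $\rho \neq 0$; since $D \neq \varnothing$ contains a nonempty compact open $V \subseteq \mcGo$ with $\rho(1_V) = 0 \neq 1_V$, the map $\rho$ is not injective. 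This contradicts congruence-simpleness, so $\mcG$ must be minimal.

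For effectiveness, suppose $\mcG$ is not effective, so that $\on{int}(\on{Iso}(\mcG))$ strictly contains the open set~$\mcGo$; I would pick $\gamma_0 \in \on{int}(\on{Iso}(\mcG)) \setminus \mcGo$ and, using the base of compact open bisections, a compact open bisection $U$ with $\gamma_0 \in U \subseteq \on{Iso}(\mcG)$. Put $D \defeq s(U) \subseteq \mcGo$, a compact open set with $1_U \neq 1_D$ since $\gamma_0 \in U \setminus D$. Consider then the canonical action of $A_S(\mcG)$ on the $S$-semimodule $S^{(\mcGo)}$ given by $\pi(f)\delta_v \defeq \sum_{s(\gamma) = v} f(\gamma)\, \delta_{r(\gamma)}$; since $f$ is a finite combination of characteristic functions of bisections, each source fibre contributes only finitely many terms, so $\pi(f)$ is well defined, and a reindexing of the defining sums by composable pairs shows $\pi \colon A_S(\mcG) \to \on{End}_S(S^{(\mcGo)})$ is an $S$-algebra homomorphism. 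Because every $\gamma \in U$ is isotropy, $U$ induces the partial identity on~$D$, exactly as~$D$ does, whence $\pi(1_U) = \pi(1_D)$; and $\pi$ is nonzero since $\pi(1_V) \neq 0$ for any nonempty compact open $V \subseteq \mcGo$. Thus $\pi$ is a nonzero non-injective homomorphism, once more contradicting congruence-simpleness, so $\mcG$ is effective.

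The step I expect to be the main obstacle is the classification input behind~(1): the reduction to congruence-simpleness of~$S$ is elementary, but the identification of the congruence-simple commutative semifields as exactly the fields and~$\B$ is the substantive ingredient and must be imported from the theory of congruence-simple semirings. Within~(2) the delicate points are the multiplicativity of~$\pi$ and the claim $\pi(1_U) = \pi(1_D)$ with $1_U \neq 1_D$; the non-Hausdorff setting intervenes only through the fact that~$\mcGo$ is open but need not be closed, which is precisely why I locate~$U$ inside $\on{int}(\on{Iso}(\mcG))$ directly rather than attempting to separate it from the unit space.
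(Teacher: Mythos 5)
Your proposal is correct in substance, but it is organized differently from the paper's proof, so a comparison is worthwhile. For part (1) the paper does not pass through congruence-simpleness of~$S$: it invokes Golan's dichotomy (a semifield is either a field or zerosumfree) and, in the zerosumfree case, lifts the quotient $\pi \colon S \to \B$, $s \mapsto 1$ for $s \ne 0$, to a nonzero non-injective homomorphism $A_S(\mcG) \to A_{\B}(\mcG)$, which kills congruence-simpleness unless $S = \B$. Your reduction via the pointwise-lifted congruence $\approx$ is valid (finiteness of the convolution sums makes $\approx$ compatible with $\ast$, and $1 \sim 0$ forces the full congruence), but the classification you then import --- congruence-simple semifields are exactly the fields and~$\B$ --- is proved by precisely those same two ingredients (Golan's dichotomy plus the $\B$-quotient), so the substantive content coincides; you have merely repackaged it. For part (2) the paper first reduces to $S = \B$ (citing Steinberg's theorem for the field case) and then uses the universal property of $A_{\B}(\mcG)$ (Corollary~\ref{univproperty}) to produce homomorphisms onto $A_{\B}(\mcG_D)$ and into $\on{End}_{\B}(F(\mcGo))$; you instead build the restriction homomorphism and the regular representation $\pi$ on $S^{(\mcGo)}$ directly over an arbitrary semifield~$S$, verifying multiplicativity by hand (invariance of~$C$ for the restriction; reindexing by composable pairs for~$\pi$). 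This buys uniformity --- no case split on field versus~$\B$ and no appeal to the field-case theorem in part (2) --- at the cost of direct verifications, which you carry out correctly; your effectiveness argument is otherwise the same as the paper's (a compact open bisection inside $\on{Iso}(\mcG)$ not contained in $\mcGo$ acts as the partial identity on its source).

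Two repairable blemishes. First, you assert that $\rho \colon A_S(\mcG) \to A_S(\mcG_C)$, $f \mapsto f|_{\mcG_C}$, is \emph{surjective}, but you never prove this, and over a semifield that is not additively idempotent (or in the non-Hausdorff setting) surjectivity is not obvious: a compact open bisection of~$\mcG_C$ need not be expressible as an $S$-combination of restrictions $1_{U \cap \mcG_C}$ with $U \in \mcG^a$. Fortunately surjectivity is never needed: you use it only to deduce $\rho \ne 0$ from $A_S(\mcG_C) \ne 0$, and non-vanishing follows directly by evaluating $\rho$ at $1_{V'}$ for a compact open $V' \subseteq \mcGo$ containing a point of~$C$ (such $V'$ exists since $\mcGo$ has a base of compact open sets), because then $\rho(1_{V'}) = 1_{V' \cap C} \ne 0$. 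Second, the classification input for part (1) should either be cited precisely or proved; as noted above, the two-line proof is Golan's dichotomy together with the homomorphism $S \to \B$ collapsing all nonzero elements, which is exactly what the paper writes out. With these repairs your argument is complete.
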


\begin{proof}
First, we note that since~$S$ is a semifield, by \cite[Prop.~4.34]{g:sata}, $S$~is either a field or zerosumfree.
If~$S$ is a field, then the proposition follows from \cite[Th.~3.5]{s:spasoegawatisa}.  Henceforth we consider the case when~$S$ is a zerosumfree semifield. \medskip

(1) Since~$S$ is zerosumfree, there exists a surjective semiring homomorphism $\pi \colon S \longrightarrow \B$ such that $\pi(0) =0$ and $\pi(s) = 1$ for all $0 \ne s \in S$.
For any compact open subset~$U$ of~$\mcG$, we denote by~$t_U$ the characteristic function $\mcG \longrightarrow \B$ of~$U$.  We claim that the formula \[ \phi \colon A_S(\mcG) \longrightarrow A_{\B}(\mcG) \,, \quad \sum_{U \in F} a_U 1_U \longmapsto \sum_{U \in F} \pi(a_U) t_U \,, \]
is well-defined on linear combinations of indicator functions, where~$F$ is a finite set of compact open bisections.  Indeed, assume that \[ f \defeq \sum_{U \in F} a_U 1_U = \sum_{V \in H} b_V 1_V \eqdef g , \]
where each~$a_U$ and~$b_V$ is nonzero in~$S$, and each of~$F$ and~$H$ is a finite set of compact open bisections.  Then, since~$S$ is zerosumfree, and $f = g$, we have $\bigcup_{U \in F} U = \bigcup_{V \in H} V$,
so $\phi(f) = \sum_{U \in F} t_U = t_{\cup_{U \in F}} = t_{\cup_{V \in H}} = \sum_{V \in H} t_V = \phi(g)$, as desired.  By using Proposition~\ref{convoprod} (2), it is straightforward to show that~$\phi$ is a semiring homomorphism.

Assume that there exists a nonzero element $s \in S$ such that $s \ne 1$.  Fix a nonempty compact open bisection~$U$ of $\mcG$.  We then have $s 1_U \ne 1_U$ and $\phi(s 1_U) = \pi(s) t_U = t_U = \phi(1_U)$.  Therefore, the map~$\phi$ defines a nontrivial congruence and $A_S(\mcG)$ is not congruence-simple, a contradiction.  Thus we obtain that $s = 1$ for all $0 \ne s\in S$, and~$S$ is exactly the Boolean semifield~$\B$, as desired. \medskip

(2) Assume that~$\mcGo$ contains a nontrivial open invariant subset~$V$.  Let $D \defeq \mcGo \setminus V$.  We then have that~$\mcG_D$ coincides with the \emph{restriction} \[ \mcG|_{D} \defeq \{ \gamma \in \mcG \mid s(\gamma), \, r(\gamma) \in D \} \] of~$\mcG$ to~$D$.
Thus~$\mcG_D$ is a topological subgroupoid of~$\mcG$ with the relative topology, and its unit space is~$D$.  Since~$D$ is closed in~$\mcGo$, and the map $s \colon \mcG \longrightarrow \mcGo$ is continuous,
$\mcG_D = s^{-1}(D)$ is closed in $\mcG$, and so $U \cap \mcG_D$ is a compact open bisection of~$\mcG_D$ for any compact open bisection~$U$ of~$\mcG$.  This implies that~$\mcG_D$ is an ample groupoid.

For any compact open subset~$U$ of~$\mcG$, we denote by~$t_U$ the characteristic function $\mcG_D \longrightarrow \B$ of $U \cap \mcG_D$.  It is clear that the collection $\{ t_U \mid U$~is a compact open in~$\mcG \}$ of elements of $A_{\B}(\mcG_D)$ satisfies the conditions (1) and (2) of Corollary~\ref{univproperty}, by which there exists a $\B$-algebra homomorphism $\phi \colon A_{\B}(\mcG) \longrightarrow A_{\B}(\mcG_D)$ such that $\phi(1_U) = t_U$ for all compact open subsets~$U$ of~$\mcG$.
Since~$V$ is a nontrivial open subset of~$\mcGo$, there exist nonempty compact open subsets~$U_1$ and~$U_2$ of~$\mcGo$ such that $U_1 \subseteq V$ and $U_2 \cap D \ne \varnothing$.  This implies that $\phi(1_{U_1}) =t_{U_1 \cap \mcG_D} = 0$ (since $U_1 \cap \mcG_D = \varnothing$) and $\phi(1_{U_2}) = t_{U_2 \cap \mcG_D} \ne 0$, so $\phi$ is a nonzero homomorphism, but not injective.  Therefore, $A_{\B}(\mcG)$ is not congruence-simple, a contradiction, whence $\mcG$ is minimal. \medskip

We next show that~$\mcG$ is effective.  Denote by $F(\mcGo)$ the free $\B$-semimodule with basis~$\mcGo$.  Let~$U$ be a compact open bisection of~$\mcG$, and observe that $s(\alpha) \longmapsto r(\alpha)$ determines a homeomorphism from $s(U)$ to $r(U)$.  We define a map $f_U \colon \mcGo \longrightarrow F(\mcGo)$ by
\[ f_U(x) = \begin{cases}
r(\alpha) & \text{if } x = s(\alpha) \text{ and } \alpha \in U , \\
0 & \text{otherwise}, \end{cases} \]
for all $x \in \mcGo$.  By the universal property
of the free $\B$-semimodule $F(\mcGo)$, there exists an element $t_U \in \on{End}_{\B}(F(\mcGo))$ extending~$f_U$.

For any compact open subset~$B$ of~$\mcG$, define $t_B \defeq \sum_{U \in F} t_U \in \on{End}_{\B}(F(\mcGo))$, where $B = \bigcup_{U \in F} U$, with~$F$ a finite set of compact open bisections of~$\mcG$.  Indeed, for any $x \in \mcGo$, we have
\[ \sum_{U \in F} t_U(x) = \sum_{U \in F} \sum_{\alpha \in U} \{ r(\alpha) \mid x = s(\alpha) \} = \sum_{\alpha \in \bigcup_{U \in F} U} \{ r(\alpha) \mid x = s(\alpha) \} \,. \]  In particular, $t_B$ only depends on $B = \bigcup_{U \in F} U$ and is well-defined.

It is easy to see that the collection $\{t_B \mid B \text{ is compact open in } \mcG \}$ of elements of $\on{End}_{\B}(F(\mcGo))$ satisfies the conditions (1) and (2) of Corollary~\ref{univproperty}, by which there exists a $\B$-algebra homomorphism $\psi \colon A_{\B}(\mcG) \longrightarrow \on{End}_{\B} (F(\mcGo))$ such that $\psi(1_B) = t_B$ for all compact open subsets~$B$ of~$\mcG$.  The homomorphism~$\psi$ is nonzero because $t_U \ne 0$ for any nonempty compact open bisection~$U$ of~$\mcG$.

Now suppose that~$\mcG$ is not effective.  Then there is a nonempty compact open bisection $U \subseteq \mcG$ such that $U \nsubseteq \mcGo$ and $s(\alpha) = r(\alpha)$ for all $\alpha \in U$.  It implies that $U \ne s(U)$ and $t_U = t_{s(U)}$, hence $1_U \ne 1_{s(U)}$ whereas \[ \psi(1_U) = t_U = t_{s(U)} =\psi(1_{s(U)}) , \]
showing that~$\psi$ is not injective.  Therefore, $A_{\B}(\mcG)$ is not congruence-simple, a contradiction, so that $\mcG$~is effective, concluding the proof.
\end{proof}

The next result, being a $\B$-algebra analog of \cite[Prop.~4.5]{bcfs:soaateg} and \cite[Prop.~3.4]{s:spasoegawatisa}, provides a criterion for minimal ample groupoids.  It plays an important role in the proof of the subsequent main result (Theorem~\ref{thm:simpleness}).

\begin{lem}\label{minimalcriterion}
An ample groupoid~$\mcG$ is minimal if and only if~$1_V$ generates $A_{\B}(\mcG)$ as an ideal for all nonempty compact open subsets~$V$ of~$\mcGo$.
\end{lem}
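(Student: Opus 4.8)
\emph{The plan} is to work over $\B$ throughout and exploit the bijection of Remark~\ref{SAg-Bool-rem}(2), identifying $A_\B(\mcG)$ with the compact open subsets of~$\mcG$, addition with union, and convolution with the groupoid product. The first step I would take is to make the ideal generated by $1_V$ explicit. Since $V\subseteq\mcGo$ we have $VV=V$, so $1_V$ is idempotent, and because addition is idempotent a short check (using Remark~\ref{SAg-Bool-rem}(1) to evaluate $1_A*1_V*1_B=1_{AVB}$) shows that the two-sided ideal is
\[ \langle 1_V\rangle \,=\, \big\{\, 1_W \;\big|\; W=\textstyle\bigcup_{i=1}^{n} A_i V B_i,\ A_i,B_i \text{ compact open in }\mcG \,\big\} . \]
So ``$1_V$ generates $A_\B(\mcG)$'' means exactly that every compact open $W\subseteq\mcG$ is a finite union of such products.

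\emph{For sufficiency} of minimality, I fix a nonempty compact open $V\subseteq\mcGo$ and first aim to place \emph{all} compact open subsets of~$\mcGo$ into $\langle 1_V\rangle$, then bootstrap to arbitrary bisections. The crucial input is that the saturation $r(\mcG_V)=\{r(\gamma)\mid s(\gamma)\in V\}$ is nonempty, open (as $r$ is open and $\mcG_V=s^{-1}(V)$ is open) and invariant, hence equals $\mcGo$ by minimality. Thus for every unit~$u$ there is $\gamma$ with $s(\gamma)=u$ and $r(\gamma)\in V$; choosing a compact open bisection $B\ni\gamma$ with $r(B)\subseteq V$ gives $1_{B^{-1}}*1_V*1_B=1_{s(B)}\in\langle 1_V\rangle$, a compact open neighbourhood of~$u$ inside~$\mcGo$ that lies in the ideal. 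Covering a given compact open $U\subseteq\mcGo$ by finitely many such $s(B)$ and using that $\langle 1_V\rangle$ is closed under unions and absorbs multiplication (so $1_{U\cap s(B)}=1_U*1_{s(B)}\in\langle 1_V\rangle$) yields $1_U\in\langle 1_V\rangle$. Finally, for an arbitrary compact open bisection~$U$ I write $1_U=1_U*1_{s(U)}\in\langle 1_V\rangle$, and a general compact open set is a finite union of bisections; hence $\langle 1_V\rangle=A_\B(\mcG)$.

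\emph{For necessity}, I would argue the contrapositive. Suppose $\mcGo$ admits a nontrivial open invariant subset~$W$ and pick any nonempty compact open $V\subseteq W$. Using the explicit form above, I show that each generator $AVB$ is contained in $\mcG^W_W$: for $\alpha\in AV$ one has $s(\alpha)\in V\subseteq W$, and the invariance of~$W$ (equivalently $s(\gamma)\in W\Leftrightarrow r(\gamma)\in W$, obtained by applying the defining condition also to $\gamma^{-1}$) forces both $s$ and $r$ of every element of $AVB$ to lie in~$W$. Hence every member of $\langle 1_V\rangle$ is a compact open subset of $\mcG^W_W$. Choosing a compact open $U\subseteq\mcGo$ meeting $\mcGo\setminus W$ (possible since $W\neq\mcGo$ and $\mcGo$ has a basis of compact open sets), we get $1_U\notin\langle 1_V\rangle$, so $1_V$ fails to generate the algebra.

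\emph{The main obstacle} is the sufficiency direction: converting the purely topological minimality hypothesis into the algebraic assertion that $\langle 1_V\rangle$ contains $1_U$ for every unit-space~$U$. Its heart is the identity $1_{B^{-1}}*1_V*1_B=1_{s(B)}$ together with the saturation argument $r(\mcG_V)=\mcGo$ and the ensuing compactness covering; by contrast, necessity is routine set bookkeeping. I expect no trouble from non-Hausdorffness, since all manipulations are at the level of set products and are justified by Remark~\ref{SAg-Bool-rem}(1) rather than by continuity of characteristic functions.
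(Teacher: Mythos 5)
Your proof is correct, and it splits into a half that mirrors the paper and a half that genuinely differs. The sufficiency direction ($\mcG$ minimal $\Rightarrow$ $1_V$ generates) is essentially the paper's own argument: the paper also saturates $V$ (it writes $s(\mcG^V)$, which is the same set as your $r(\mcG_V)$), concludes from minimality that this nonempty open invariant set is all of $\mcGo$, uses the same key identity $1_{B^{-1}} \ast 1_V \ast 1_B = 1_{s(B)}$ for compact open bisections $B$ with $r(B) \subseteq V$, and finishes by a compactness covering; the only cosmetic difference is that the paper covers $K = r(U)$ by sets $s(B_u) \subseteq K$ and writes $1_U = 1_K \ast 1_U$, whereas you cover $U$ itself and intersect inside the ideal afterwards. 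The necessity direction is where you depart from the paper: there the paper passes to the restriction $\mcG_D$ with $D = \mcGo \setminus W$, checks that it is again an ample groupoid, invokes the universal property (Corollary~\ref{univproperty}) to build a nonzero homomorphism $\phi \colon A_{\B}(\mcG) \to A_{\B}(\mcG_D)$, and exhibits $1_V \in \on{Ker}(\phi)$, a proper ideal. You instead compute $\langle 1_V \rangle$ explicitly as finite unions of sets $A_i V B_i$ (with the idempotency $VV = V$ absorbing $1_V$ itself and the one-sided terms) and show, using invariance of $W$ in both directions (via inverses), that every generator lies in $\mcG^W_W$, so the ideal misses $1_U$ whenever $U$ meets $\mcGo \setminus W$. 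Your route is more elementary and self-contained, needing neither the restriction groupoid nor the universal property; what the paper's route buys is reuse of machinery it has to set up anyway for Proposition~\ref{Neccondprop}. Two minor points worth tidying: you overload the letter $W$ (generic ideal member versus the invariant set), and your description of $\langle 1_V \rangle$ tacitly uses that a product of two compact open subsets of $\mcG$ is again compact open — this does hold, by Proposition~\ref{convoprod}\,(1), Remark~\ref{SAg-Bool-rem}\,(1) and Lemma~\ref{expresslem}, but deserves a sentence.
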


\begin{proof}
($\Longrightarrow$). Assume that~$\mcG$ is a minimal ample groupoid.  Let~$V$ be a nonempty compact open subset of~$\mcGo$, and~$I$ an ideal of $A_{\B}(\mcG)$ generated by~$1_V$.  If~$U$ is any compact open bisection of~$\mcG$, we must prove that $1_U \in I$.  Let $K \defeq r(U) \subseteq \mcGo$.  We then have that~$K$ is a compact open subset of~$\mcGo$.  Since $s(\mcG^V)$ is a nonempty open invariant set, and~$\mcG$ is minimal, $s(\mcG^V) = \mcGo$, and hence $K \subseteq s(\mcG^V)$.  Thus, for any $u \in K$, there exists an element $\alpha_u \in \mcG$ such that $s(\alpha_u) = u$ and $r(\alpha_u) \in V$.  For each $u \in K$, let $B_u$ be a compact open bisection of~$\mcG$ containing~$\alpha_u$ such that $r(B_u) \subseteq V$ and $s(B_u) \subseteq K$.  We then have $1_{s(B_u)} = 1_{B^{-1}_u} \ast 1_V \ast 1_{B_u} \in I$.  Since~$K$ is compact, there exists a finite subset $\{ u_1, \dots, u_n \}$ of~$K$ such that $\{ s(B_{u_i}) \mid i = 1, \dots, n \}$ covers~$K$.  Therefore, $1_K = \sum_{i=1}^n 1_{s(B_{u_i})} \in I$, and so $1_U = 1_K \ast 1_U \in I$.  It follows that $I = A_{\B}(\mcG)$. \medskip
	
($\Longleftarrow$). Suppose that~$\mcG$ is not minimal.  Let~$U$ be a nontrivial open invariant subset of~$\mcGo$ and let $D \defeq \mcGo \setminus U$.  Then~$\mcG_D$ coincides with the restriction $\mcG|_D \defeq \{ \gamma \in \mcG \mid s(\gamma), \, r(\gamma) \in D\}$ of~$\mcG$ to~$D$.
As shown in the proof of Proposition~\ref{Neccondprop}, $\mcG_D$ is an ample groupoid with unit space~$D$, and there is a nonzero $\B$-algebra homomorphism $\phi \colon A_{\B}(\mcG) \longrightarrow A_{\B}(\mcG_D)$ such that $\phi(1_B) = 1_{B \cap \mcG_D}$ for all compact open subsets~$B$ of $\mcG$.
This implies that $\on{Ker}(\phi) \defeq \phi^{-1}(0)$ is a proper ideal of $A_{\B}(\mcG)$.  Since~$U$ is a nontrivial open subset of~$\mcGo$, there exists a nonempty compact open subset~$V$ of~$\mcGo$ such that $V \subseteq U$.  We then have $\phi(1_V) = t_{V \cap \mcG_D} = 0$ (since $V \cap \mcG_D = \varnothing$), and so $1_V \in \on{Ker}(\phi) \setminus \{ 0 \}$, whence by our hypothesis $\on{Ker}(\phi) = A_{\B}(\mcG)$, a contradiction, thus finishing the proof.
\end{proof}

Recall that $\mcT = \{ u \in \mcGo \mid \mcG_u^u = \{ u \} \}$ is the (invariant) set of units in a groupoid~$\mcG$ that have trivial isotropy group.

\begin{defn}
Let~$S$ be a semiring and~$\mcG$ an ample groupoid.  We denote by $\equiv_{A_S(\mcG)}$ the binary relation on $A_S(\mcG)$ defined by: for all $f$ and $g \in A_S(\mcG)$, 
\[ f \equiv_{A_S(\mcG)} g ~\text{ iff }~ f(\alpha) = g(\alpha) \text{ for all } \alpha \in \mcG \text{ such that } s(\alpha) , r(\alpha) \in \mcT . \]
\end{defn}

We have the following useful fact.

\begin{lem}\label{lem:cong1}
For a commutative semiring~$S$ and an ample groupoid $\mcG$, the relation $\equiv_{A_S(\mcG)}$ is a congruence on $A_S(\mcG)$.
\end{lem}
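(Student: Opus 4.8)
The plan is to check the defining properties of a congruence in turn: first that $\equiv_{A_S(\mcG)}$ is an equivalence relation, then that it is compatible with addition, and finally that it is compatible with convolution on both sides. Since $\equiv_{A_S(\mcG)}$ is defined by requiring pointwise equality of $f$ and $g$ on the fixed subset $\{ \alpha \in \mcG \mid s(\alpha), r(\alpha) \in \mcT \}$ of~$\mcG$, reflexivity, symmetry and transitivity are inherited directly from equality in~$S$, so the first point is immediate. For additive compatibility, if $f \equiv_{A_S(\mcG)} g$ and $h \in A_S(\mcG)$, then for every $\alpha$ with $s(\alpha), r(\alpha) \in \mcT$ we have $(h + f)(\alpha) = h(\alpha) + f(\alpha) = h(\alpha) + g(\alpha) = (h + g)(\alpha)$, whence $h + f \equiv_{A_S(\mcG)} h + g$.

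The real content lies in multiplicative compatibility, and this is the step I expect to carry the key idea. Suppose $f \equiv_{A_S(\mcG)} g$ and let $h \in A_S(\mcG)$; I want to show $h * f \equiv_{A_S(\mcG)} h * g$ (and symmetrically $f * h \equiv_{A_S(\mcG)} g * h$). Fix $\gamma \in \mcG$ with $s(\gamma), r(\gamma) \in \mcT$. By Proposition~\ref{convoprod} the convolution is a well-defined finite sum $(h * f)(\gamma) = \sum_{\alpha \beta = \gamma} h(\alpha) f(\beta)$, and I want it to equal $(h * g)(\gamma)$. It suffices to argue that every composable pair $(\alpha, \beta)$ with $\alpha \beta = \gamma$ satisfies $s(\beta), r(\beta) \in \mcT$, for then $f(\beta) = g(\beta)$ and the two sums agree term by term.

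The crucial observation is that $\mcT$ is invariant. For a composable pair $\alpha \beta = \gamma$ one has $s(\beta) = s(\gamma) \in \mcT$ and $r(\alpha) = r(\gamma) \in \mcT$, while the interior composition point is $r(\beta) = s(\alpha)$. Invariance of~$\mcT$ means precisely that $s(\delta) \in \mcT$ if and only if $r(\delta) \in \mcT$ for every $\delta \in \mcG$ (apply the defining implication to $\delta$ and to $\delta^{-1}$); applied to $\delta = \alpha$ together with $r(\alpha) \in \mcT$, it yields $s(\alpha) = r(\beta) \in \mcT$. Hence both $s(\beta)$ and $r(\beta)$ lie in~$\mcT$, giving $h * f \equiv_{A_S(\mcG)} h * g$. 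The right-handed compatibility is entirely symmetric: in $(f * h)(\gamma) = \sum_{\alpha \beta = \gamma} f(\alpha) h(\beta)$ one checks that every composable pair has $s(\alpha), r(\alpha) \in \mcT$, since $r(\alpha) = r(\gamma) \in \mcT$ directly and $s(\alpha) \in \mcT$ again by invariance. Thus the only non-formal ingredient is the invariance of~$\mcT$, which forces the interior composition point of any factorization of such a~$\gamma$ to again have trivial isotropy; everything else reduces to the termwise equality of finite convolution sums.
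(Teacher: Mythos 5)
Your proof is correct and follows essentially the same route as the paper's: both arguments reduce everything to the observation that, by invariance of $\mcT$, any factorization $\gamma = \alpha\beta$ of an element with $s(\gamma), r(\gamma) \in \mcT$ has its interior point $r(\beta) = s(\alpha)$ in $\mcT$ as well, so the convolution sums agree term by term. The only difference is cosmetic: the paper additionally records compatibility with scalar multiplication ($sf \equiv sg$), which is not required by the stated definition of a hemiring congruence.
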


\begin{proof}
It is obvious that $\equiv_{A_S(\mcG)}$ is an equivalence relation on $A_S(\mcG)$ satisfying $s f \equiv_{A_S(\mcG)} s g$ and $f + h \equiv_{A_S(\mcG)} g + h$ for all $s \in S$ and $f, g, h \in A_S(\mcG)$ with $f \equiv_{A_S(\mcG)} g$.

Let $f, g, h \in A_S(\mcG)$ with $f \equiv_{A_S(\mcG)} g$ and $\gamma \in \mcG$ with $s(\gamma), r(\gamma) \in \mcT$.  Notice that whenever $\gamma = \alpha \beta$ for some $\alpha, \beta \in \mcG$, we have $s(\beta) = s(\gamma) \in \mcT$ and $r(\alpha) = r(\gamma) \in \mcT$, as well as $r(\beta) = s(\alpha) \in \mcT$, since $\mcT$ is an invariant subset of $\mcGo$.  From $f \equiv_{A_S(\mcG)} g$ and these remarks, we obtain that
\begin{gather*}
  (f \ast h) (\gamma) = \sum_{\substack{r(\beta) = s(\alpha)\\ \alpha \beta = \gamma}}\! f(\alpha) h(\beta) = \sum_{\substack{r(\beta) = s(\alpha)\\ \alpha \beta = \gamma}}\! g(\alpha) h(\beta) = (g \ast h) (\gamma) , \\
  (h \ast f) (\gamma) = \sum_{\substack{r(\beta) = s(\alpha)\\ \alpha \beta = \gamma}}\! h(\alpha) f(\beta) = \sum_{\substack{r(\beta) = s(\alpha)\\ \alpha \beta = \gamma}}\! h(\alpha) g(\beta) = (h \ast g) (\gamma) ,
\end{gather*}
so that $f \ast h \equiv_{A_S(\mcG)} g \ast h$ and $h \ast f \equiv_{A_S(\mcG)} h \ast g$, as desired.
\end{proof}

The following result, being a “congruence” analog of \cite[Prop.~4.1]{n:goegasa}, plays an important role in the proof of our main result below.

\begin{prop}\label{cong-simp}
For a topologically principal and minimal ample groupoid~$\mcG$ over a semifield~$S$, the quotient algebra $A_S(\mcG) /_{\equiv_{A_S(\mcG)}}$ is congruence-simple if and only if~$S$ is either a field or the Boolean semifield~$\B$.
\end{prop}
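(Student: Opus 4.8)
The plan is to prove the two implications separately, putting the real work into the converse. For the forward direction, assume $A_S(\mcG)/{\equiv_{A_S(\mcG)}}$ is congruence-simple. Since $S$ is a semifield it is, by \cite[Prop.~4.34]{g:sata}, either a field or zerosumfree; the field case is one of the permitted alternatives, so suppose $S$ is a zerosumfree semifield and let $\pi\colon S\to\B$ be the surjective homomorphism used in the proof of Proposition~\ref{Neccondprop}. Applying $\pi$ pointwise gives a homomorphism $\phi\colon A_S(\mcG)\to A_{\B}(\mcG)$, $f\mapsto\pi\circ f$, and since $\pi$ preserves agreement of functions on $\mcG_{\mcT}\defeq\{\gamma\in\mcG\mid s(\gamma),r(\gamma)\in\mcT\}$, it carries $\equiv_{A_S(\mcG)}$ into $\equiv_{A_{\B}(\mcG)}$ and descends to a homomorphism $\bar\phi\colon A_S(\mcG)/{\equiv}\to A_{\B}(\mcG)/{\equiv}$. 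Density of $\mcT$ gives $U\cap\mcG_{\mcT}\neq\varnothing$ for every nonempty compact open bisection $U$, so $\bar\phi$ is nonzero, hence injective by congruence-simplicity. If there were $s\in S$ with $0\neq s\neq 1$, then $[s1_U]\neq[1_U]$ while $\bar\phi([s1_U])=[\pi(s)t_U]=[t_U]=\bar\phi([1_U])$, a contradiction; thus $S=\B$.

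For the converse, suppose $S$ is a field or $\B$ and let $\rho$ be a congruence on $A_S(\mcG)$ with $\equiv\subseteq\rho$ and $\rho\neq\equiv$; I must show $\rho$ is full. First I claim it suffices to produce a single nonempty compact open $W\subseteq\mcGo$ with $(1_W,0)\in\rho$. Granting this, the computation in the forward implication of Lemma~\ref{minimalcriterion} applies verbatim with congruence relations replacing ideal membership: for an arbitrary compact open bisection $U$ with $K\defeq r(U)$, minimality produces bisections $B_u$ with $1_{s(B_u)}=1_{B_u^{-1}}\ast 1_W\ast 1_{B_u}\,\rho\,0$, whence $1_K\,\rho\,0$ and $1_U=1_K\ast 1_U\,\rho\,0$; as every element is an $S$-combination of such $1_U$, we obtain $f\,\rho\,0$ for all $f$, so $\rho$ is full.

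It remains to extract such a $W$. Choose $(a,b)\in\rho\setminus\equiv$, so $a$ and $b$ differ at some $\gamma_0\in\mcG_{\mcT}$; left-multiplying by $1_{B^{-1}}$ for a compact open bisection $B\ni\gamma_0$ moves the discrepancy onto the unit $u_0\defeq s(\gamma_0)\in\mcT$, yielding $(a',b')\in\rho$ with $a'(u_0)\neq b'(u_0)$ (using that $(1_{B^{-1}}\ast a)(u_0)=a(\gamma_0)$). The heart of the argument is a \emph{compression lemma}: there is a compact open $V\ni u_0$ in $\mcGo$ such that $1_V\ast a'\ast 1_V\equiv d$ with $d\in A_S(\mcGo)$ and $d(u_0)=a'(u_0)$, and likewise for $b'$. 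The mechanism is that for $u_0\in\mcT$ every off-diagonal element with source $u_0$ has range $\neq u_0$ (trivial isotropy), so by continuity and Hausdorffness of $\mcGo$ a small $V$ forces each such germ to \emph{escape}, $\theta_U(V)\cap V=\varnothing$, killing its contribution after compression. Here lies the \textbf{main obstacle}, and the reason the non-Hausdorff setting is genuinely harder: since $\mcGo$ need not be closed, an element does not split cleanly into compactly supported diagonal and off-diagonal parts, and off-diagonal bisections whose source-sets carry $u_0$ on their boundary can still deposit points of $\mcG_{\mcT}$ inside $\mcG_V^V$. To neutralize these I would argue that, for a fixed representative, the finitely many boundary sets $\partial s(U)$ are closed and nowhere dense, so density of $\mcT$ (topological principality) lets one locate the detection point within $\mcT$ off all of them while retaining the discrepancy; arranging this persistence is exactly the delicate step and is where topological principality is really used.

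Once the compression lemma is available, both ground cases conclude quickly via $\equiv\subseteq\rho$. If $S$ is a field, set $c\defeq a'-b'$, so $(c,0)\in\rho$ and $c(u_0)\neq 0$; compression gives a diagonal $d\equiv 1_V\ast c\ast 1_V\,\rho\,0$ with $d(u_0)\neq 0$, and since $d\in A_S(\mcGo)$ is constant on a clopen piece $W\ni u_0$ we get $c(u_0)1_W\,\rho\,0$, hence $(1_W,0)\in\rho$ after multiplying by $c(u_0)^{-1}$. (This is the congruence reformulation of Nekrashevych's \cite[Prop.~4.1]{n:goegasa}, ideals and congruences coinciding over a field.) If $S=\B$, then $a'=1_{W_a}$, $b'=1_{W_b}$, and additive idempotency gives $1_{W_b}\,\rho\,1_{W_a\cup W_b}$ with, after relabeling, $u_0\in(W_a\cup W_b)\setminus W_b$; compression replaces these modulo $\equiv$ by diagonal idempotents $1_{V''}\,\rho\,1_{V'}$ with $V''\subseteq V'\subseteq\mcGo$ and $u_0\in V'\setminus V''$, and multiplying both sides by $1_W$ for the compact open $W\defeq V'\setminus V''\ni u_0$ yields $(1_W,0)=(1_W\ast 1_{V'}\ast 1_W,\,1_W\ast 1_{V''}\ast 1_W)\in\rho$. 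In either case the reduction of the second paragraph finishes the proof, and I expect the compression lemma to be the most technical step.
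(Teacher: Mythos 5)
Your overall architecture coincides with the paper's: the forward direction via the pointwise Boolean collapse $\pi\colon S\to\B$ (the paper phrases it as a congruence pulled back from agreement on $\mcT$-points in $A_{\B}(\mcG)$ rather than as an induced injective homomorphism, but it is the same argument), and the reverse direction via (i) moving a discrepancy of a pair $(a,b)\in\rho$ with $a\not\equiv b$ onto a unit $u_0\in\mcT$, (ii) compressing by $1_V$ to obtain ``diagonal'' elements, (iii) extracting $(1_W,0)\in\rho$ with $W\subseteq\mcGo$ nonempty compact open, and (iv) concluding by minimality (the paper runs (iv) through the ideal $I\defeq\{f\mid (f,0)\in\rho\}$ and Lemma~\ref{minimalcriterion}). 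The genuine gap is that your ``compression lemma'' is precisely the technical heart of the paper's proof (its Cases 1--4), and you do not prove it; moreover, the sketch you offer for the ``delicate step'' is misdirected. You worry about off-diagonal bisections ``whose source-sets carry $u_0$ on their boundary'' and propose to relocate the detection point off the sets $\partial s(U)$ using density of $\mcT$. But $s(U)$ is a compact open subset of the locally compact Hausdorff space $\mcGo$, hence clopen, so $\partial s(U)=\varnothing$; there is nothing to avoid, and no relocation is needed ($u_0$ lies in $\mcT$ by construction, which is all that is used). The actual proof is an exact (not merely modulo $\equiv$) case analysis over each bisection $D$ occurring in fixed representations of the two elements: if $u_0\in D$, pick $V\subseteq D\cap\mcGo$; then $VDV\subseteq D\cap\mcGo$, because any $\delta\in VDV$ and the unit $s(\delta)$ are both elements of the bisection $D$ with the same source, hence $\delta=s(\delta)$. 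If $u_0\in r(D)$ (resp.\ $s(D)$) but $u_0\notin D$, trivial isotropy of $u_0$ forces the unique element of $D$ over $u_0$ to have distinct source and range, and Hausdorff separation inside $\mcGo$ yields $V$ with $VDV=\varnothing$. If $u_0\notin r(D)\cup s(D)$, these sets are compact hence closed, so again a small $V$ gives $VDV=\varnothing$. Non-Hausdorffness of $\mcG$ never intervenes: even if $u_0\in\overline{D}\setminus D$, compactness of $r(D)$ forces $u_0\in r(D)$ and the second case applies; no splitting of $D$ into diagonal and off-diagonal parts is ever attempted, so the obstacle you flag as the ``main obstacle'' does not arise.

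A secondary gap: your claim that the computation in Lemma~\ref{minimalcriterion} ``applies verbatim'' with $\rho$ replacing ideal membership is only true over $\B$; the step $1_K=\sum_{i=1}^n 1_{s(B_{u_i})}$ uses additive idempotency and fails over a field when the sets $s(B_{u_i})$ overlap. Over a field you must either disjointify the cover first (possible, since $\mcGo$ is Hausdorff with a basis of compact opens), or simply dispose of the field case at the outset as the paper does: over a field congruences correspond to ideals, so $A_S(\mcG)/_{\equiv_{A_S(\mcG)}}\cong A_S(\mcG)/I$ and one quotes \cite[Prop.~4.1]{n:goegasa}, reserving the hands-on argument for $S=\B$. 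With the compression lemma actually proved as above and the field case rerouted, your endgame (producing $W\defeq V'\setminus V''$ and then invoking minimality) is correct and matches the paper's up to minor rearrangement.
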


\begin{proof}
($\Longrightarrow$). Assume that $A_S(\mcG) /_{\equiv_{A_S(\mcG)}}$ is congruence-simple.  Since~$S$ is a semifield, $S$~is either a field or zerosumfree.  Consider the case when~$S$ is a zerosumfree semifield.
Then, there exists a semiring homomorphism $\pi \colon S \longrightarrow \B$ given by $\pi(0) = 0$ and $\pi(s) = 1$ for all $0 \ne s \in S$.
We define a binary relation~$\rho$ on $A_S(\mcG)$ as follows: for all $f = \sum_{B \in F} a_B 1_B$ and $g = \sum_{C \in H} b_C 1_C$, where each of~$F$ and~$H$ is a finite set of compact open bisections of $\mcG$,
$f \,\rho\, g$ if and only if $\sum_{B \in F} \pi(a_B) 1_B(\alpha) = \sum_{C \in H} \pi(b_C) 1_C(\alpha)$ in $A_{\B}(\mcG)$ for all $\alpha \in \mcG$ such that $r(\alpha), s(\alpha) \in \mcT$.
Following the same argument as in the proof of Proposition~\ref{Neccondprop}\,(1), the relation~$\rho$ is well-defined.  Moreover, similar to the proof of Lemma~\ref{lem:cong1}, we have that~$\rho$ is a congruence on $A_S(\mcG)$ containing $\equiv_{A_S(\mcG)}$.

Since $\mcG$ is topologically principal, $(1_U, 0) \notin \rho$ for any nonempty compact open subset~$U$ of $\mcGo$, and so $\rho \ne A_S(\mcG)^2$.
Because $A_S(\mcG) /_{\equiv_{A_S(\mcG)}}$ is congruence-simple, $\equiv_{A_S(\mcG)}$ is a maximal congruence on $A_S(\mcG)$, and so $\rho$ equals $\equiv_{A_S(\mcG)}$.  Let~$s$ be a nonzero element in~$S$ and~$U$ a nonempty compact open subset of $\mcGo$.
It is obvious that $(s 1_U, 1_U) \in \rho$, so that $s 1_U \equiv_{A_S(\mcG)} 1_U$, which means that $s 1_U(\alpha) = 1_U(\alpha)$ in $A_S(\mcG)$ for all $\alpha \in \mcG$ such that $r(\alpha), s(\alpha) \in \mcT$.  Since~$\mcG$ is topologically principal, there exists an element $u \in U \cap \mcT$.  We then have $s = s 1_U(u) = 1_U(u) = 1$, and thus $S = \B$. \medskip

($\Longleftarrow$). If~$S$ is a field, then $A_S(\mcG) /_{\equiv_{A_S(\mcG)}} \cong A_S(\mcG)/I$, where $I = \{ f \in A_S(\mcG) \mid f \equiv_{A_S(\mcG)} 0 \}$.  Therefore, the statement follows from \cite[Prop.~4.1]{n:goegasa}.

Consider the case $S = \B$.  It is sufficient to prove that $\equiv_{A_{\B}(\mcG)}$ is a maximal congruence on $A_{\B}(\mcG)$.  We first note that $1_U \not\equiv_{A_{\B}(\mcG)} 0$ for all nonempty compact open subsets~$U$ of $\mcGo$, and so $\equiv_{A_{\B}(\mcG)}$ is not $A_{\B}(\mcG)^2$.

Assume that~$\rho$ is a congruence on $A_{\B}(\mcG)$ such that ${\equiv_{A_{\B}(\mcG)}} \subset \rho$.  Then, there exist two elements~$f$ and~$g$ in $A_{\B}(\mcG)$ such that $(f, g) \in \rho$ and $f \not\equiv_{A_{\B}(\mcG)} g$, \textit{i.e.}, $f(\alpha) \ne g(\alpha)$ for some $\alpha \in \mcG$ with $r(\alpha), s(\alpha)\in \mcT$.
Let~$U$ be a compact open bisection of~$\mcG$ containing $\alpha^{-1}$, and let $u \defeq r(\alpha) = \alpha \alpha^{-1} \in \mcT$.  We have
\[ f \ast 1_U(u) = \sum_{\beta \gamma = u} f(\beta) 1_U(\gamma) = f(\alpha) 1_U(\alpha^{-1}) = f(\alpha) , \]
since the unique $\gamma \in U$ such that $s(\gamma) = s(u) = u$ is given by $\gamma = \alpha^{-1}$, and then $\beta = \alpha$.  Similarly, $g \ast 1_U(u) = g(\alpha)$, and so $f \ast 1_U(u) \neq g \ast 1_U(u)$.

Thus, there exist two elements $\theta, \psi \in A_{\B}(\mcG)$ such that $(\theta, \psi) \in \rho$ with $\theta(u)\neq \psi(u)$ for some $u \in \mcT$ (we may take $\theta = f \ast 1_U$ and $\psi = g\ast 1_U$ as above).  Without loss of generality, one may assume that $\theta(u) = 0$ and $\psi(u) =1$.
Write $\theta = \sum_{B \in F} 1_B$ and $\psi = \sum_{C \in H} 1_C$, where each of~$F$ and~$H$ is a finite set of compact open bisections of $\mcG$.  For each $D \in F \cup H$, choose a compact open neighbourhood $V_D \subseteq \mcGo$ of $u$ as follows:

\textit{Case 1:} $u\in D$.  Let $V_D$ be a compact open subset in $\mcGo$ such that $u \in V_D$ and $V_D \subseteq D \cap \mcGo$.  Then $V_D D V_D \subseteq D \cap \mcGo$.

\textit{Case 2:} there exists $\gamma \in D$ such that $r(\gamma) = u$ and $s(\gamma) \ne u$.  Since $\mcGo$ is Hausdorff, there are two compact open subsets $U$ and $W$ of $\mcGo$ such that $u = r(\gamma) \in U$, $s(\gamma) \in W$ and $U\cap W = \varnothing$.
Then $U \cap r(D)$ and $W \cap s(D)$ are nonempty compact open subsets of $\mcGo$, and the set $D' \defeq (U \cap r(D)) D (W \cap s(D))$ is a compact open bisection of~$\mcG$.  More\-over, $\gamma \in D' \subseteq D$ and $r(D') \cap s(D') = \varnothing$, and we let $V_D \defeq r(D')$.  Then we have $u \in V_D$ and $V_D D V_D = \varnothing$.

\textit{Case 3:} there exists $\gamma \in D$ such that $s(\gamma) = u$ and $r(\gamma) \ne u$.  Since $\mcGo$ is Hausdorff, there are two compact open subsets $U$ and $W$ of $\mcGo$ such that $u = s(\gamma) \in U$, $r(\gamma) \in W$ and $U\cap W = \varnothing$.
Then $U \cap s(D)$ and $W \cap r(D)$ are nonempty compact open subsets of $\mcGo$, and the set $D' \defeq (W \cap r(D)) D (U \cap s(D))$ is a compact open bisection of~$\mcG$.  More\-over, $\gamma \in D' \subseteq D$ and $r(D') \cap s(D') = \varnothing$, and we let $V_D \defeq s(D')$.  Then we have $u \in V_D$ and $V_D D V_D = \varnothing$.

\textit{Case 4:} $u \notin r(D) \cup s(D)$.  Since $\mcGo$ is Hausdorff, and $r(D)$ and $s(D)$ are compact open subsets of $\mcGo$, they are closed in $\mcGo$, so $\mcGo \setminus (r(D) \cup s(D))$ is an open subset of $\mcGo$ containing $u$.
Let $V_D$ be a compact open subset of $\mcGo$ such that $u \in V_D \subseteq \mcGo \setminus (r(D) \cup s(D))$.  We then have $V_D D V_D = \varnothing$. \medskip

Let $V \defeq \bigcap_{D \in F \cup H} V_D$.  Then $V$ is a compact open subset of $\mcGo$ containing~$u$.  Furthermore, since $\theta(u) = 0$, \textit{i.e.}, $u \notin \bigcup_{B \in F} B$, and by construction of $V$, we have $1_V \ast \theta \ast 1_V = \sum_{B \in F} 1_{V B V} = 0$.
Since $\psi(u) =1$, we have $u \in D$ for some $D \in H$.  By construction of $V$, the set $U \defeq V D V$ is a compact open subset of $\mcGo$ containing $u$, and so $1_V \ast \psi \ast 1_V(u) = \sum_{C \in H} 1_{V C V}(u) = 1$.  Let $h \defeq 1_V \ast \psi \ast 1_V$.  We then note that $h \ne 0$ and $(h, 0) = (1_V \ast \psi \ast 1_V ,\, 1_V \ast \theta \ast 1_V) \in \rho$, and so $(1_U, 0) = (1_U \ast h ,\, 1_U \ast 0) \in \rho$.

Let us consider the ideal $I \defeq \{ f \in A_{\B}(\mcG) \mid (f, 0) \in \rho \}$ of $A_{\B}(\mcG)$.
From the observation above, $I$~contains a nonzero element~$1_U$ with $\on{supp}(1_U) = U \subseteq \mcGo$.  Since~$\mcG$ is minimal, we infer $I = A_{\B}(\mcG)$ from Lemma~\ref{minimalcriterion}.  It immediately follows that $\rho = A_{\B}(\mcG)^2$, whence $\equiv_{A_{\B}(\mcG)}$ is a maximal congruence on $A_{\B}(\mcG)$.  This concludes the proof.
\end{proof}

Let~$S$ be a commutative semiring, $\mcG$ an ample groupoid, and $f, g \in A_S(\mcG)$. Let \[ \on{Eq}(f, g) \defeq \{ \alpha\in \mcG \mid f(\alpha) = g(\alpha) \} . \]
We define a binary relation $\rho_{A_S(\mcG)}$ on $A_S(\mcG)$ as follows: $f \,\rho_{A_S(\mcG)}\, g$ if and only if $\on{Eq}(f, g)$ is dense in~$\mcG$.  We should mention that if~$S$ is a field, then $\rho_{A_S(\mcG)}$ is a congruence on $A_S(\mcG)$, by \cite[Prop.~3.7]{cepss:soaatnhg}.  The following result plays an important role in the proof of our main result below.

\begin{lem}\label{lem:cong2}
For an ample groupoid $\mcG$, the following statements hold:
\begin{enumerate}[\quad \upshape (1)]
\item $\rho_{A_{\B}(\mcG)}$ is a congruence on $A_{\B}(\mcG)$;
\item If, in addition, $\mcG$ is topologically principal, then $\rho_{A_{\B}(\mcG)}$ equals $\,\equiv_{A_{\B}(\mcG)}$.
\end{enumerate}
\end{lem}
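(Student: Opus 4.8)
The plan is to work throughout with the set-theoretic description of $A_{\B}(\mcG)$ from Lemma~\ref{expresslem} and Remark~\ref{SAg-Bool-rem}: every element is an indicator $1_U$ of a compact open set~$U$, with $1_U + 1_V = 1_{U \cup V}$ and $1_U \ast 1_V = 1_{UV}$. For $f = 1_U$, $g = 1_V$ one has $\on{Eq}(1_U, 1_V) = \mcG \setminus (U \triangle V)$, so $1_U \,\rho_{A_{\B}(\mcG)}\, 1_V$ exactly when $U \triangle V$ has empty interior. The technical core of both parts is the characterization
\[ \on{int}(U \triangle V) = \varnothing \iff U \subseteq \overline V \text{ and } V \subseteq \overline U , \]
valid for arbitrary open sets $U, V$. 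I would prove the forward direction via $\on{int}(U \setminus V) = U \setminus \overline V$ (since $U$ is open), so that $U \not\subseteq \overline V$ yields the nonempty open subset $U \setminus \overline V \subseteq U \triangle V$; for the converse, any nonempty open $O \subseteq U \triangle V$ is disjoint from both $U$ and $V$ (e.g.\ $O \cap U$ is open and contained in $U \subseteq \overline V$, hence meets~$V$, yet $O \cap U \subseteq U \setminus V$), forcing $O \subseteq \mcG \setminus (U \cup V)$ and thus $O = \varnothing$.

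\emph{Part (1).} Reflexivity and symmetry of $\rho_{A_{\B}(\mcG)}$ are immediate. For transitivity I would use $U \triangle W \subseteq (U \triangle V) \cup (V \triangle W)$ together with the characterization: from $U \subseteq \overline V \subseteq \overline W$ and $W \subseteq \overline V \subseteq \overline U$ one gets $\on{int}(U \triangle W) = \varnothing$. Additive compatibility reduces to $\overline{V \cup H} = \overline V \cup \overline H$. The only laborious point is multiplicative compatibility, where I would prove $\overline V \cdot H \subseteq \overline{VH}$ for a compact open bisection~$H$ (then extend to finite unions): given $\alpha \in \overline V$, $\beta \in H$ composable and a basic neighbourhood of $\alpha\beta$, continuity of multiplication on $\mcG^{(2)}$ and the continuous section $(r|_H)^{-1}$ let me pair a point $v \in V$ near $\alpha$ with the point $(r|_H)^{-1}(s(v)) \in H$ near~$\beta$ composably, landing in $VH$. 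From $U \subseteq \overline V$ this gives $UH \subseteq \overline V H \subseteq \overline{VH}$, and symmetrically, so $1_{UH} \,\rho_{A_{\B}(\mcG)}\, 1_{VH}$; left multiplication is identical with $(s|_H)^{-1}$ in place of $(r|_H)^{-1}$.

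\emph{Part (2).} Since $\mcT$ is invariant, $s(\alpha) \in \mcT \iff r(\alpha) \in \mcT$, so $1_U \equiv_{A_{\B}(\mcG)} 1_V$ means precisely $(U \triangle V) \cap \mcG_{\mcT} = \varnothing$, writing $\mcG_{\mcT} = \{ \alpha \mid s(\alpha) \in \mcT \}$. The inclusion ${\equiv_{A_{\B}(\mcG)}} \subseteq {\rho_{A_{\B}(\mcG)}}$ is the easy half: by topological principality $\mcG_{\mcT}$ is dense (any nonempty open set contains a bisection $O$, and $s(O)$ is open, hence meets the dense set~$\mcT$), so a nonempty interior of $U \triangle V$ would force $(U \triangle V) \cap \mcG_{\mcT} \neq \varnothing$. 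The reverse inclusion is the main obstacle, as it is where the non-Hausdorff behaviour must be controlled. I would first show that any $\alpha$ with $s(\alpha) \in \mcT$ has unique limits: if $\gamma_i \to \alpha$ and $\gamma_i \to \delta$, then continuity of $s, r$ into the Hausdorff space $\mcGo$ gives $s(\delta) = s(\alpha)$ and $r(\delta) = r(\alpha)$, whence $\alpha^{-1}\delta \in \mcG_{s(\alpha)}^{s(\alpha)}$ is trivial and $\delta = \alpha$.

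Using unique limits, I would establish the key claim: \emph{if $V$ is a compact open bisection and $\alpha \in \overline V$ with $s(\alpha) \in \mcT$, then $\alpha \in V$.} Indeed, for a net $v_i \in V$ with $v_i \to \alpha$, the set $s(V)$ is compact and open, hence closed in the Hausdorff space $\mcGo$, so $s(\alpha) = \lim s(v_i) \in s(V)$; then $v_i = (s|_V)^{-1}(s(v_i)) \to (s|_V)^{-1}(s(\alpha)) \in V$, and uniqueness of limits at $\alpha$ forces $\alpha \in V$. To conclude, take $\alpha \in (U \triangle V) \cap \mcG_{\mcT}$, say $\alpha \in U \setminus V$, and write $V$ as a finite union of compact open bisections $V_j$; the claim gives $\alpha \notin \overline{V_j}$ for each $j$, hence $\alpha \notin \overline V$. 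Picking a compact open bisection $B$ with $\alpha \in B \subseteq U$, we obtain $\alpha \in B \setminus \overline V = \on{int}(B \setminus V) \subseteq \on{int}(U \triangle V)$, so $U \triangle V$ has nonempty interior. This is the contrapositive of ${\rho_{A_{\B}(\mcG)}} \subseteq {\equiv_{A_{\B}(\mcG)}}$, completing the identification.
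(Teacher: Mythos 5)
Your proposal is correct, and while part (1) stays close to the paper's strategy, part (2) takes a genuinely different route. In part (1) both proofs rest on the same closure characterization of $\rho_{A_{\B}(\mcG)}$ — your condition ($U \subseteq \overline{V}$ and $V \subseteq \overline{U}$) and the paper's ($U \cup V \subseteq \overline{U \cap V}$) are both restatements of $\on{int}(U \mathbin{\triangle} V) = \varnothing$ — and the real difference is the multiplicative step: the paper argues set-algebraically, taking a nonempty compact open bisection $A \subseteq (UW \cup VW) \setminus \overline{UW \cap VW}$ and deriving a contradiction from the nonempty open set $AW^{-1} \subseteq U \cup V$ using only the bisection identities $WW^{-1} = r(W)$, $W^{-1}W = s(W)$, whereas you prove $\overline{V}H \subseteq \overline{VH}$ by a net/continuity argument with the section $(r|_H)^{-1}$; both are valid, the paper's trick avoiding any appeal to continuity of multiplication. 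In part (2) the paper proves only the easy inclusion ${\equiv_{A_{\B}(\mcG)}} \subseteq \rho_{A_{\B}(\mcG)}$ and gets equality from maximality of $\equiv_{A_{\B}(\mcG)}$, which it obtains by citing Proposition~\ref{cong-simp}; note that that proposition assumes minimality (its proof invokes Lemma~\ref{minimalcriterion}), a hypothesis absent from the lemma's statement — harmless in the paper's applications, where minimality always holds, but strictly a blemish. You instead prove the hard inclusion $\rho_{A_{\B}(\mcG)} \subseteq {\equiv_{A_{\B}(\mcG)}}$ directly, via the key claim that a point with trivial isotropy lying in the closure of a compact open bisection $V$ must lie in $V$ (unique limits at such points, with compactness of $V$ ensuring $s(\alpha) \in s(V)$). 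This is self-contained, eliminates the hidden minimality assumption, and thus actually strengthens and repairs the paper's argument, at the modest price of the extra topological work with nets.
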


\begin{proof} (1) From Remark~\ref{SAg-Bool-rem}\,(2), every element of $A_{\B}(\mcG)$ is of the form~$1_U$, where~$U$ is a compact open subset of~$\mcG$.  Then, if~$U$ and~$V$ are compact open subsets of~$\mcG$, notice that $(U \cup V) \setminus \overline{U \cap V} = \big( (U \cup V) \setminus (U \cap V) \big)^{\!\circ}$ and hence
\[ (1_U, 1_V) \in \rho_{A_{\B}(\mcG)} ~\Leftrightarrow~ \big( (U \cup V) \setminus (U \cap V) \big)^{\!\circ} = \varnothing ~\Leftrightarrow~ U \cup V \subseteq \overline{U \cap V} \,. \]

It is obvious that $\rho_{A_{\B}(\mcG)}$ is reflexive and symmetric.  Let $U$, $V$ and $W$ be compact open subsets of $\mcG$ such that $(1_U, 1_V) \in \rho_{A_{\B}(\mcG)}$ and $(1_V, 1_W) \in \rho_{A_{\B}(\mcG)}$, \textit{i.e.}, $U \cup V \subseteq \overline{U \cap V}$ and $V \cup W \subseteq \overline{V \cap W}$.
Suppose that $(1_U, 1_W) \notin \rho_{A_{\B}(\mcG)}$, then there exists a nonempty open set $A \subseteq (U \cup W) \setminus \overline{U \cap W}$.  In particular, $A \cap (U \cup W) \ne \varnothing$ and we may assume that $A \cap U \ne \varnothing$.
Since $A \cap U \subseteq U \cup V \subseteq \overline{U \cap V}$ we have a nonempty open set $B \defeq A \cap U \cap V \ne \varnothing$, and since $B \subseteq V \cup W \subseteq \overline{V \cap W}$ it follows that $A \cap U \cap V \cap W = B \cap V \cap W \ne \varnothing$.
In particular, $A \cap U \cap W \ne \varnothing$ and thus $A \cap \overline{U \cap W} \ne \varnothing$, which is a contradiction.
This shows that $\rho_{A_{\B}(\mcG)}$ is transitive, and therefore, $\rho_{A_{\B}(\mcG)}$ is an equivalence relation.

Next we claim that $\rho_{A_{\B}(\mcG)}$ is a congruence on $A_{\B}(\mcG)$.  It is, indeed, enough to show that the pairs $(1_U + 1_W ,\, 1_V + 1_W)$, $(1_U \ast 1_W ,\, 1_V \ast 1_W)$
and ($1_W \ast 1_U ,\, 1_W \ast 1_V)$ are in $\rho_{A_{\B}(\mcG)}$ for all compact open bisections~$W$ and compact open subsets~$U, V$ of $\mcG$ with $(1_U, 1_V) \in \rho_{A_{\B}(\mcG)}$.

We have $1_U + 1_W = 1_{U \cup W}$ and $1_V + 1_W = 1_{V \cup W}$, and since $U \cup V \subseteq \overline{U \cap V}$ we infer $U \cup V \cup W \subseteq (\overline{U \cap V}) \cup W \subseteq \overline{(U \cap V) \cup W} = \overline{(U \cup W) \cap (V \cup W)}$, which shows that $(1_U + 1_W ,\, 1_V + 1_W) \in \rho_{A_{\B}(\mcG)}$.

Now notice that $1_U \ast 1_W = 1_{U W}$ and $1_V \ast 1_W = 1_{V W}$, and suppose that $(1_{U W}, 1_{V W}) \notin \rho_{A_{\B}(\mcG)}$.  Then, since~$\mcG$ is ample, there exists a nonempty compact open bisection $A \subseteq (U W \cup V W) \setminus \overline{U W \cap V W}$ of~$\mcG$.  We then have $s(A) \subseteq s(W)$ and $A W^{-1} \subseteq (U W \cup V W) W^{-1} \subseteq U \cup V$.
The set $A W^{-1}$ is open and nonempty, since $A = A s(A) \subseteq A s(W) = A W^{-1} W$.  Hence, from $U \cup V \subseteq \overline{U \cap V}$ it follows that $A W^{-1} \cap U \cap V \ne \varnothing$, whence $A \cap (U W \cap V W) \ne \varnothing$, which is a contradiction.  This shows that $(1_U \ast 1_W ,\, 1_V \ast 1_W) \in \rho_{A_{\B}(\mcG)}$, and the proof for $(1_W \ast 1_U ,\, 1_W \ast 1_V) \in \rho_{A_{\B}(\mcG)}$ is similar. \medskip

(2) Since $\mcG$ is topologically principal, the algebra $A_S(\mcG)/_{\equiv_{A_S(\mcG)}}$ is congruence-simple by Proposition \ref{cong-simp} and thus the congruence $\equiv_{A_S(\mcG)}$ is maximal.  We claim that ${\equiv_{A_{\B}(\mcG)}} \subseteq \rho_{A_{\B}(\mcG)}$.
Indeed, let~$U$ and~$V$ be compact open subsets of~$\mcG$ such that $1_U \equiv_{A_{\B}(\mcG)} 1_V$, and let $A \subseteq \mcG$ be any nonempty open set.  Then $s(A)$ is a nonempty open set in~$\mcGo$, and hence $s(A) \cap \mcT \ne \varnothing$, since~$\mcG$ is topologically principal.
Since $1_U \equiv_{A_{\B}(\mcG)} 1_V$, it follows that $1_U(\alpha) = 1_V(\alpha)$ for some $\alpha \in A$ with $s(\alpha) \in \mcT$ (and thus $r(\alpha) \in \mcT$), and hence $\alpha \in \on{Eq}(1_U, 1_V) \cap A$.  This shows that $\on{Eq}(1_U, 1_V)$ is dense in~$\mcG$, and we have $(1_U, 1_V) \in \rho_{A_{\B}(\mcG)}$, as claimed.

Observe that $(1_U, 0) \notin \rho_{A_{\B}(\mcG)}$ for any nonempty compact open subset~$U$ of~$\mcGo$, whence $\rho_{A_{\B}(\mcG)} \ne A_{\B}(\mcG)^2$.  Since $\equiv_{A_S(\mcG)}$ is maximal and contained in~$\rho_{A_{\B}(\mcG)}$, we infer that the congruences are equal, as desired.
\end{proof}

Now we are able to present the main result of this section, which is a “semiring” analog of \cite[Th.~3.14]{cepss:soaatnhg} and \cite[Th.~4.16]{ss:soisaega},
characterizing the congruence-simple Steinberg algebras of second-countable ample groupoids over semifields.

\begin{thm}\label{thm:simpleness}
Let~$\mcG$ be a second-countable ample groupoid over a semifield~$S$.  Then the Steinberg algebra $A_S(\mcG)$ is congruence-simple if and only if the following conditions are satisfied:
\begin{enumerate}[\quad \upshape (1)]
\item $S$~is either a field or the Boolean semifield $\B$;
\item $\mcG$~is both minimal and effective;
\item $\on{Eq}(f, g)$ is not dense in~$\mcG$, for any two distinct elements~$f$ and~$g$ in $A_S(\mcG)$. 
\end{enumerate}
\end{thm}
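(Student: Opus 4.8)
The plan is to organize everything around the density relation $\rho_{A_S(\mcG)}$ (defined just before Lemma~\ref{lem:cong2}) and its position relative to the congruence $\equiv_{A_S(\mcG)}$. Writing $\triangle_{A_S(\mcG)}$ for the diagonal, condition~(3) is literally the statement $\rho_{A_S(\mcG)}=\triangle_{A_S(\mcG)}$, since $\on{Eq}(f,g)$ is dense exactly when $f\,\rho_{A_S(\mcG)}\,g$, and $\on{Eq}(f,f)=\mcG$ gives $\triangle_{A_S(\mcG)}\subseteq\rho_{A_S(\mcG)}$ for free. The backbone of both implications is the inclusion $\equiv_{A_S(\mcG)}\subseteq\rho_{A_S(\mcG)}$, valid whenever~$\mcG$ is topologically principal; it lets me transfer information between the two relations.

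For the \emph{only if} direction I would first apply Proposition~\ref{Neccondprop} to obtain (1) and (2) at once. By~(1) the semifield~$S$ is a field or~$\B$, and in either case $\rho_{A_S(\mcG)}$ is a congruence --- for fields by \cite[Prop.~3.7]{cepss:soaatnhg}, for~$\B$ by Lemma~\ref{lem:cong2}(1). Congruence-simplicity then pins $\rho_{A_S(\mcG)}$ down to $\triangle_{A_S(\mcG)}$ or the full relation, and I would exclude the latter by choosing any nonempty compact open bisection~$U$: then $\on{Eq}(1_U,0)=\mcG\setminus U$ is not dense, because the nonempty open set~$U$ has nonempty interior, so $(1_U,0)\notin\rho_{A_S(\mcG)}$. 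Hence $\rho_{A_S(\mcG)}=\triangle_{A_S(\mcG)}$, which is exactly~(3).

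For the \emph{if} direction I would start by promoting effectiveness to topological principality: being second-countable and effective, $\mcG$ is topologically principal by \cite[Prop.~3.6]{r:csica}. With minimality and~(1), Proposition~\ref{cong-simp} gives that $A_S(\mcG)/_{\equiv_{A_S(\mcG)}}$ is congruence-simple, i.e.\ that $\equiv_{A_S(\mcG)}$ is a maximal congruence. I would then prove $\equiv_{A_S(\mcG)}\subseteq\rho_{A_S(\mcG)}$ directly (this is the half of Lemma~\ref{lem:cong2}(2) needing no hypothesis on~$S$): if $f\equiv_{A_S(\mcG)}g$ and $A\subseteq\mcG$ is nonempty open, then $s(A)$ is nonempty open in~$\mcGo$ and meets the dense invariant set~$\mcT$, so some $\alpha\in A$ has $s(\alpha),r(\alpha)\in\mcT$ and thus $f(\alpha)=g(\alpha)$, showing $\on{Eq}(f,g)$ dense. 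Now~(3) reads $\rho_{A_S(\mcG)}=\triangle_{A_S(\mcG)}$, so the sandwich $\triangle_{A_S(\mcG)}\subseteq\equiv_{A_S(\mcG)}\subseteq\rho_{A_S(\mcG)}=\triangle_{A_S(\mcG)}$ forces $\equiv_{A_S(\mcG)}=\triangle_{A_S(\mcG)}$. A maximal congruence equal to the diagonal admits only itself and the full relation above it, which is congruence-simplicity of $A_S(\mcG)$.

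The conceptual crux --- the step I would dwell on --- is recognizing that the hypotheses split cleanly along the chain $\triangle_{A_S(\mcG)}\subseteq\equiv_{A_S(\mcG)}\subseteq\rho_{A_S(\mcG)}$: conditions (1)--(2) are exactly what force maximality of $\equiv_{A_S(\mcG)}$, while~(3) is exactly what collapses $\rho_{A_S(\mcG)}$, and with it the sandwiched $\equiv_{A_S(\mcG)}$, onto the diagonal. Once this is seen the assembly is essentially forced, and the only inputs carrying real weight are external: the congruence property of $\rho_{A_S(\mcG)}$ and, above all, the maximality statement of Proposition~\ref{cong-simp}, whose proof is where the non-Hausdorff singular-function subtleties are genuinely confronted. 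Second-countability enters only in the converse, through the effective$\,\Rightarrow\,$topologically principal passage.
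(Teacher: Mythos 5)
Your proof is correct, and it assembles the paper's own ingredients along a genuinely different route. Both you and the paper obtain (1) and (2) from Proposition~\ref{Neccondprop}, but from there the paper case-splits on~$S$ in \emph{both} directions, outsourcing the field case entirely to \cite[Th.~3.14]{cepss:soaatnhg} and running the congruence argument only for $S=\B$: for necessity it deduces ${\equiv_{A_{\B}(\mcG)}} = \triangle_{A_{\B}(\mcG)}$ from simplicity (Lemma~\ref{lem:cong1}) and then converts this to condition (3) via topological principality and the equality $\rho_{A_{\B}(\mcG)} = {\equiv_{A_{\B}(\mcG)}}$ of Lemma~\ref{lem:cong2}\,(2); for sufficiency it uses that same equality together with Proposition~\ref{cong-simp}. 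You instead treat fields and~$\B$ uniformly. For necessity you use only that $\rho_{A_S(\mcG)}$ is itself a congruence (Lemma~\ref{lem:cong2}\,(1) for~$\B$, and \cite[Prop.~3.7]{cepss:soaatnhg} for fields --- a fact the paper records just before Lemma~\ref{lem:cong2}) together with $(1_U,0)\notin\rho_{A_S(\mcG)}$, so congruence-simplicity pins $\rho_{A_S(\mcG)}$ to the diagonal directly; this bypasses $\equiv_{A_S(\mcG)}$, topological principality, and hence second-countability altogether, so your argument in fact shows (1)--(3) are necessary for an \emph{arbitrary} ample groupoid, a small sharpening that the paper's route does not exhibit. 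For sufficiency you need only the easy inclusion ${\equiv_{A_S(\mcG)}} \subseteq \rho_{A_S(\mcG)}$ --- which, as you correctly note, holds over any commutative semiring by the density-of-$\mcT$ argument --- rather than the full equality of Lemma~\ref{lem:cong2}\,(2), and Proposition~\ref{cong-simp} (which already covers fields as well as~$\B$) supplies the maximality of $\equiv_{A_S(\mcG)}$; consequently you never cite \cite[Th.~3.14]{cepss:soaatnhg}, your only dependence for the field case being \cite[Prop.~4.1]{n:goegasa} through Proposition~\ref{cong-simp}. What the paper's arrangement buys in exchange is brevity: with Lemma~\ref{lem:cong2}\,(2) and the external theorem in hand, its proof of Theorem~\ref{thm:simpleness} takes only a few lines, whereas yours is more self-contained, more uniform, and makes transparent exactly where each hypothesis (in particular second-countability, needed only for the converse) actually enters.
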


\begin{proof}
($\Longrightarrow$). Items (1) and (2) follow from Proposition~\ref{Neccondprop}.  If~$S$ is a field, then item (3) follows from \cite[Th.~3.14]{cepss:soaatnhg}.  Consider the case when $S = \B$.
Since $\equiv_{A_{\B}(\mcG)}$ is a congruence by Lemma~\ref{lem:cong1}, and $A_S(\mcG)$ is congruence-simple, we infer that $\equiv_{A_{\B}(\mcG)}$ equals $\Delta_{A_{\B}(\mcG)}$.
Since~$\mcG$ is an effective second-countable ample groupoid, by \cite[Prop.~3.6]{r:csica} it is topologically principal.  Then, by Lemma~\ref{lem:cong2}, we readily obtain that $\rho_{A_{\B}(\mcG)} = \Delta_{A_{\B}(\mcG)}$, proving item (3).

($\Longleftarrow$). If~$S$ is a field, then the statement follows from \cite[Th.~3.14]{cepss:soaatnhg}.  Consider the case when $S = \B$.  Since~$\mcG$ is an effective second-countable ample groupoid, by \cite[Prop.~3.6]{r:csica} it is topologically principal.
From Lemma~\ref{lem:cong2}\,(2) we have that $\equiv_{A_{\B}(\mcG)}$ equals $\rho_{A_{\B}(\mcG)}$, while $\rho_{A_{\B}(\mcG)} = \Delta_{A_{\B}(\mcG)}$ because of item (3).  Then it follows from Proposition \ref{cong-simp} that the algebra $A_{\B}(\mcG) = A_{\B}(\mcG) /_{\equiv_{A_{\B}(\mcG)}}$ is congruence-simple, thus finishing the proof.
\end{proof}

In a topological space we say a subset~$B$ is a \emph{regular open}
set if~$B$ equals the interior of its closure.  The intersection of a
collection of regular open sets is again regular open but the same is
not true for unions; see, \textit{e.g.}, \cite[Chap.~10]{gh:itba} for
a detailed discussion of regular open sets.
By Theorem~\ref{thm:simpleness}, we obtain the following interesting
corollary, which provides sufficient conditions for Steinberg algebras
over the Boolean semifield~$\B$ to be congruence-simple, and being an
$\B$-algebra analog of \cite[Cor.~3.16]{cepss:soaatnhg}.

\begin{cor}\label{sim-cor}
Let $\mcG$ be a second-countable ample groupoid which satisfies the following two conditions:
\begin{enumerate}[\quad \upshape (1)]
\item $\mcG$ is minimal and effective;
\item every compact open subset of $\mcG$ is regular open.
\end{enumerate}
Then $A_{\B}(\mcG)$ is congruence-simple.
\end{cor}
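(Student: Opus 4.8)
The plan is to derive the result from our main characterization, Theorem~\ref{thm:simpleness}, specialized to the ground semifield $S = \B$. In that case condition~(1) of the theorem holds trivially, and condition~(2) is precisely hypothesis~(1) of the corollary. Hence everything reduces to verifying condition~(3): for any two \emph{distinct} elements $f, g \in A_{\B}(\mcG)$, the set $\on{Eq}(f, g)$ fails to be dense in~$\mcG$. In other words, I must show that the hypothesis that every compact open subset of~$\mcG$ is regular open forces $\rho_{A_{\B}(\mcG)}$ to coincide with the diagonal.

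By Remark~\ref{SAg-Bool-rem}\,(2), I may write $f = 1_U$ and $g = 1_V$ for compact open subsets $U, V \subseteq \mcG$, where $f \ne g$ amounts to $U \ne V$. As computed in the proof of Lemma~\ref{lem:cong2}\,(1), density of $\on{Eq}(1_U, 1_V)$ is equivalent to $\big((U \cup V) \setminus (U \cap V)\big)^{\circ} = \varnothing$, that is, to the containment $U \cup V \subseteq \overline{U \cap V}$. So it suffices to prove the contrapositive of condition~(3): if $U \cup V \subseteq \overline{U \cap V}$ and both $U, V$ are regular open, then $U = V$.

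The argument I would give is a short closure--interior computation. Since $U$ is open and $U \subseteq \overline{U \cap V}$, we get $U = U^{\circ} \subseteq (\overline{U \cap V})^{\circ}$. Using $U \cap V \subseteq V$ we have $\overline{U \cap V} \subseteq \overline{V}$, so $(\overline{U \cap V})^{\circ} \subseteq (\overline{V})^{\circ} = V$ by regular openness of~$V$; hence $U \subseteq V$. By the symmetric argument (interchanging the roles of $U$ and~$V$ and using regular openness of~$U$), also $V \subseteq U$, and therefore $U = V$. This contradicts $U \ne V$, completing the verification of condition~(3) and hence the proof via Theorem~\ref{thm:simpleness}.

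There is no serious obstacle here: the content lies entirely in the single step where regular openness is invoked, which is exactly what prevents a nontrivial symmetric difference from having empty interior. The only point meriting a moment's care is that in the non-Hausdorff setting $U \cap V$ need not be compact; however, it is always open, so the manipulations with $\overline{U \cap V}$ and its interior remain valid, and compactness of $U \cap V$ is never used.
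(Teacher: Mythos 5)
Your proof is correct and takes essentially the same approach as the paper's: both reduce via Theorem~\ref{thm:simpleness} to showing that density of $\on{Eq}(1_U, 1_V)$, \textit{i.e.}\ $U \cup V \subseteq \overline{U \cap V}$, forces $U = V$ when $U, V$ are regular open. The only cosmetic difference is the final step: the paper invokes that $U \cap V$ is itself regular open to get $(\overline{U \cap V})^{\circ} = U \cap V$, whereas you use monotonicity $(\overline{U \cap V})^{\circ} \subseteq (\overline{V})^{\circ} = V$ together with symmetry---both are equally valid one-line computations.
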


\begin{proof}
Let~$f$ and~$g$ be two elements in $A_{\B}(\mcG)$.  By Lemma \ref{expresslem}, we have $f = 1_U$ and $g = 1_V$ for some compact open subsets~$U$ and~$V$ of~$\mcG$.  Suppose that $\on{Eq}(f, g)$ is dense in~$\mcG$, then we have $U \cup V \subseteq \overline{U \cap V}$.
However, the sets~$U$ and~$V$ are regular open by our hypothesis, and so $U \cap V$ is regular open.  It follows that $U \cup V \subseteq (\overline{U \cap V})^{\circ} = U \cap V$, so we infer that $U = V$ and therefore $f = g$.
Now applying Theorem~\ref{thm:simpleness}, we obtain that $A_{\B}(\mcG)$ is congruence-simple.
\end{proof}

\section{Examples based on self-similar graphs}\label{sec:examples}

In this section we establish semiring analogs of the main results introduced in \cite[Sec.~5]{cepss:soaatnhg}, which provide us with examples of congruence-simple Steinberg algebras of non-Hausdorff ample groupoids over the Boolean semifield~$\B$ (Theorem~\ref{sim-ssgalg}, Corollaries~\ref{sim-Katalg} and~\ref{sim-ssacalg}).
The examples are based on so-called self-similar graphs, which give rise to ample groupoids via the tight groupoid of an inverse semigroup associated to the self-similar graph.

We start by recalling the notion of a self-similar graph~\cite{ep:ssgautokanca}.  Consider a directed graph $E = (E^0, E^1, r, s)$ with vertex set~$E^0$, edge set~$E^1$ and source and range maps $s, r \colon E^1 \to E^0$.
A vertex $v \in E^0$ is called \emph{source} if $r^{-1}(v) = \varnothing$.  Moreover, denote by~$E^*$ the finite paths and by~$E^{\infty}$ the (one-sided) infinite paths in the graph~$E$.
We view paths from right to left, so that $\alpha \beta$ is defined if and only if $r(\beta) = s(\alpha)$, and $s(\alpha \beta) = s(\beta)$ and $r(\alpha \beta) = r(\alpha)$, for $\alpha, \beta \in E^*$.  Infinite paths $\xi \in E^{\infty}$ have thus a well-defined range $r(\xi)$.
A group~$G$ may act on the graph~$E$ by means of \emph{automorphisms}, \textit{i.e.}, by bijections $E^0 \sqcup E^1 \to E^0 \sqcup E^1$ that preserve the vertex and edge sets as well as the source and the range.
When such a group action is fixed we use the notation $g . v$ and $g . e$ for $g \in G$, $v \in E^0$, $e \in E^1$.

\begin{defn}[{cf.\ \cite[Sec.~2]{ep:ssgautokanca}}]
  A \emph{self-similar graph} $(G, E, \sigma, \phi)$ is given by a countable group~$G$, a finite graph~$E$ without sources, an action~$\sigma$ of~$G$ on~$E$ (denoted by $g . v$ and $g . e)$ and a map $\phi \colon G \times E^1 \to G$ (called \emph{one-cocycle}) with
  \[ \phi(g, e) . v = g . v \,, \qquad \phi(g h, e) = \phi(g, h . e) \phi(h, e) \] for all $g, h \in G$, $v \in E^0$, $e \in E^1$.

  We extend the one-cocycle~$\phi$ on finite paths~$\alpha$ in~$E$ recursively by \[ \phi(g, v) \defeq g \,, \qquad \phi(g, e \alpha) \defeq \phi(\phi(g, e), \alpha) \,, \]
  for $v \in E^0$, $e \in E^1$, and the action~$\sigma$ on finite or infinite paths in~$E$ by letting
  \[ g . (e_1 e_2 e_3 \dots) \defeq (g . e_1) (\phi(g, e_1) . e_2) (\phi(g, e_1 e_2) . e_3) \dots \,. \]
\end{defn}

\begin{exas}\label{exas:self-similar} (1) Every finite graph~$E$ without sources can be seen in a natural way as a self-similar graph with trivial group $G = \{ 1_G \}$.

(2) For \emph{self-similar group actions} we have a single vertex $E^0 = \{ v \}$, and a group~$G$ acts on the edge set~$E^1$.
For instance, the “2-odometer” \cite[Ex.~3.3]{n:goegasa} comprises the free cyclic group $G = \langle a \rangle \cong \Z$ acting on the graph $E = (\{ v \}, \{ e_0, e_1 \})$ by $a . e_0 = e_1$, $a . e_1 = e_0$, and the one-cocycle given by $\phi(a, e_0) = 1_G$, $\phi(a, e_1) = a$.
Another example is the “Grigorchuk group” \cite[Ex.~3.4]{n:goegasa}, in which the free group $G = \langle a, b, c, d \rangle$ interacts with $E = (\{ v \}, \{ e_0, e_1 \})$ as \begin{gather*}
a . e_0 = e_1 \,,\quad a . e_1 = e_0 \,, \qquad \phi(a, e_0) = 1_G \,, \quad \phi(a, e_1) = 1_G \,,\! \\[-1mm]
b . e_0 = e_0 \,,\quad b . e_1 = e_1 \,, \qquad \phi(b, e_0) = a \,, ~\quad\ \phi(b, e_1) = c \,,~ \\[-1mm]
c . e_0 = e_0 \,,\quad c . e_1 = e_1 \,, \qquad \phi(c, e_0) = a \,, ~\quad\ \phi(c, e_1) = d \,,~ \\[-1mm]
d . e_0 = e_0 \,,\quad d . e_1 = e_1 \,, \qquad \phi(d, e_0) = 1_G \,, \quad \phi(d, e_1) = b \,.~ \end{gather*}

(3) A \emph{Katsura algebra} (cf.~\cite[Ex.~3.4]{ep:ssgautokanca}) is a C$^*$-algebra specified by two integer $n \!\times\! n$-matrices $A = (a_{ij})$, $B = (b_{ij})$, where $a_{ij} \ge 0$ and~$A$ has no zero rows, and $a_{ij} = 0$ implies $b_{ij} = 0$.
As in~\cite{ep:ssgautokanca} we may consider an associated self-similar graph, where~$A$ serves as adjacency matrix of a graph~$E$ on~$n$ vertices and edge set $E^1 = \{ e_{ij}^k \mid 0 \le k < a_{ij} \}$.
We consider the free cyclic group $G = \langle g \rangle \cong \Z$ with action and cocycle on the graph~$E$ given by $g . v = v$ for $v \in E^0$ and
\[ g . e_{ij}^k = e_{ij}^r \,, \qquad \phi(g, e_{ij}^k) = g^q \,, \]
where $k + b_{ij} = q a_{ij} + r$ with $0 \le r < a_{ij}$ (division with remainder).

For instance, as in~\cite[Sec.~5.4]{cepss:soaatnhg}, the matrices
\[ A = \begin{pmatrix} 2 & 1 & 0 \\ 1 & 2 & 1 \\ 1 & 1 & 2 \end{pmatrix} \quad \text{ and } \quad  B = \begin{pmatrix} 1 & 2 & 0 \\ 2 & 1 & 2 \\ 0 & 2 & 1 \end{pmatrix} \]
define a graph~$E$ on three nodes with edge set \[ E^1 = \{ e_{11}^0, e_{11}^1, e_{12}, e_{13}, e_{21}, e_{22}^0, e_{22}^1, e_{23}, e_{32}, e_{33}^0, e_{33}^1 \} \,, \]
and the matrix~$B$ determines an action and a one-cocycle by
\begin{gather*}
  g . e_{ii}^0 = e_{ii}^1 \,, \quad \phi(g, e_{ii}^0) = 1_G \text{ for all } i = 1, 2, 3 \,, \\[-1mm]
  g . e_{ii}^1 = e_{ii}^0 \,, \quad \phi(g, e_{ii}^1) = g \text{ for all } i = 1, 2, 3 \,, \\
  g . e_{12} = e_{12} \,, \qquad \phi(g, e_{12}) = g^2 \,, \\[-1mm]
  g . e_{13} = e_{13} \,, \qquad \phi(g, e_{13}) = 1_G \,, \\[-1mm]
  g . e_{21} = e_{21} \,, \qquad \phi(g, e_{21}) = g^2 \,, \\[-1mm]
  g . e_{23} = e_{23} \,, \qquad \phi(g, e_{23}) = g^2 \,, \\[-1mm]
  g . e_{32} = e_{32} \,, \qquad \phi(g, e_{32}) = g^2 \,.
\end{gather*}
Notice that the group $G = \langle g \rangle$ acts like a 2-odometer on paths $\alpha \in E^*$ such that $s(e) = r(e)$ for every edge~$e$ in~$\alpha$, while for paths $\alpha \in E^*$ in which $s(e) \ne r(e)$ for all edges~$e$, one has $\phi(g, \alpha) = 1_G$ if~$e_{13}$ is an edge in~$\alpha$, and $\phi(g, \alpha) = g^{2^{|\alpha|}}$ otherwise. \end{exas}

Now given a self-similar graph $(G, E, \sigma, \phi)$, which we shortly denote as $(G, E)$, we may associate an inverse semigroup as follows (cf.~\cite[Sec.~4]{ep:ssgautokanca}).
Recall that an \emph{inverse semigroup} is a semigroup~$S$ in which for all $s \in S$ there is a unique $s^* \defeq t \in S$ such that $s t s = s$ and $t s t = t$.
Its set of idempotent elements $X(S)$ forms a meet-semilattice under multiplication.  Here we let the semigroup be
\[ S_{G, E} \defeq \{ (\alpha, g, \beta) \mid \alpha, \beta \in E^* ,\, g \in G ,\, s(\alpha) = g . s(\beta) \} \cup \{ 0 \} \,, \]
with multiplication given by
\[ (\alpha, g, \beta) (\gamma, h, \delta) \defeq
  \begin{cases} (\alpha g . \epsilon, \phi(g, \epsilon) h, \delta) &\text{if } \gamma = \beta \epsilon \,, \\
    (\alpha, g \phi(h^{-1}, \epsilon)^{-1}, \delta h^{-1} . \epsilon) & \text{if } \beta = \gamma \epsilon \,, \\
    0 & \text{otherwise} \,. \end{cases} \]
This provides an inverse semigroup where $(\alpha, g, \beta)^* = (\beta, g^{-1}, \alpha)$.  Moreover, we have $X(S_{G, E}) = \{ (\gamma, 1_G, \gamma) \mid \gamma \in E^* \}$, which we may identify with the set~$E^*$ of finite paths.

Next, one considers the action of this semigroup on its so-called tight spectrum~\cite[Sec.~8]{ep:ssgautokanca}.
An \emph{action} of an inverse semigroup~$S$ on a space~$X$ is given by a collection $\sigma = \{ \sigma_s \}_{s \in S}$ of homeomorphisms $\sigma_s \colon D(s^* s) \longrightarrow D(s s^*)$ between open subsets of~$X$ such that $\sigma_s \circ \sigma_t = \sigma_{s t}$ for all $s, t \in S$ and $\bigcup_{s \in S} D(s^* s) = X$.
In the present situation the space (called \emph{tight spectrum}) on which the semigroup $S \defeq S_{G, E}$ acts can be identified with the compact space $X \defeq E^{\infty}$ of infinite paths in~$E$, in which the cylinder sets \[ Z(\gamma) \defeq \{ \gamma \xi \mid \xi \in E^{\infty} ,\, r(\xi) = s(\gamma) \} \] serve as a basis for its topology.
Given $s = (\alpha, g, \beta) \in S_{G, E}$ we have $D(s^* s) = Z(\beta)$, $D(s s^*) = Z(\alpha)$, and the homeomorphism of the action is given by
\[ \sigma_s \colon Z(\beta) \longrightarrow Z(\alpha) \,, \qquad (\alpha, g, \beta) . \beta \xi \defeq \alpha g . \xi \,. \]

Finally, given such an action one may define the \emph{groupoid of germs} as in~\cite[Sec.~4]{e:isacca} and~\cite[Sec.~8]{ep:ssgautokanca}), which is termed the \emph{tight groupoid} of the inverse semigroup, cf.~\cite{e:isacca,ep:ttgoais}.  For a self-similar graph $(G, E)$ this groupoid is
\[ \mcG_{G, E} \defeq \{ [\alpha, g, \beta; \eta] \mid \eta = \beta \xi \} \,, \]
where $[s; \eta] = [t; \mu]$ if and only\! if $\eta = \mu$ and there is a nonzero idempotent $u \in S_{G, E}$ with $u . \eta = \eta$ and $s u = t u$.
The unit space $\smash{\mcGo_{G, E}} = \{ [\gamma, 1_G, \gamma; \eta] \mid \eta = \gamma \xi \}$ may be identified with the infinite paths~$E^{\infty}$, so that the source and range maps are
\[ s([\alpha, g, \beta; \beta \xi]) = \beta \xi \quad\text{ and }\quad r([\alpha, g, \beta; \beta \xi]) = \alpha g . \xi \,. \]
A topology basis on~$\mcG_{G, E}$ is given by compact open bisections of the form
\[ \Theta(\alpha, g, \beta; Z(\gamma)) \defeq \{ [\alpha, g, \beta; \eta] \in \mcG_{G, E} \mid \eta \in Z(\gamma) \} \]
for $\alpha, \beta, \gamma \in E^*$ and $g \in G$.  Thus $\mcG_{G, E}$ is a locally compact, second-countable and ample groupoid.
In~\cite[Secs.~12--14]{ep:ssgautokanca}, based on results in~\cite{ep:ttgoais}, characterizations are given for~$\mcG_{G, E}$ to be Hausdorff, minimal and effective, in terms of properties of the self-similar graph $(G, E, \sigma, \phi)$ and the action of $S_{G, E}$ on $E^{\infty}$.
In particular, by~\cite[Th.~12.2]{ep:ssgautokanca}, the groupoid~$\mcG_{G, E}$ is Hausdorff if and only if for any $g \in G$ there are only finitely many minimal strongly fixed paths, where a \emph{strongly fixed path} is a path $\alpha \in E^*$ such that $g . \alpha = \alpha$ and $\phi(g, \alpha) = 1_G$.

The following condition helps in proving that any compact open subset of~$\mcG_{G, E}$ is regular open.
For $s = (\alpha, g, \beta) \in S_{G, E}$ we denote its set of \emph{fixed elements} by
\[ F_s \defeq \{ \xi \in E^{\infty} \mid (\alpha, g, \beta) . \xi = \xi \} \,, \]
and its set of \emph{trivially fixed elements} by
\[ T\!F_s \defeq \{ \xi \in E^{\infty} \mid \exists u = (\gamma, 1_G, \gamma) \in S_{G, E} \text{ with } s u = u \text{ and } \xi \in Z(\gamma) \} \,. \]
Following \cite[Def.~5.4]{cepss:soaatnhg}, the inverse semigroup~$S_{G, E}$ is said to satisfy \emph{Condition}\,(S) if for any finite set $\{ s_1, \dots, s_n \} \subseteq S_{G, E}$ of non-idempotents,
\[ \xi \in \bigcap_{i=1}^n (F_{s_i} \!\setminus\! T\!F_{s_i}) \quad\text{ implies }\quad \xi \notin (\bigcup_{i=1}^n F_{s_i})^{\circ} . \]
We then have the following.

\begin{thm}[{cf.~\cite[Th.~5.11\,(3)]{cepss:soaatnhg}}]\label{sim-ssgalg}
Let $(G, E, \sigma, \phi)$ be a self-similar graph such that $S_{G, E}$ satisfies Condition\,{\upshape (S)} and such that $\mcG_{G, E}$ is minimal, and let~$K$ be a semifield.  Then the algebra $A_K(\mcG_{G, E})$ is congruence-simple if and only if~$K$ is either a field or the Boolean semifield.
\end{thm}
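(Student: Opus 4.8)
The plan is to reduce everything to the general characterization of Theorem~\ref{thm:simpleness}, which applies directly since $\mcG_{G,E}$ is a second-countable ample groupoid. The forward implication is immediate: if $A_K(\mcG_{G,E})$ is congruence-simple, then Theorem~\ref{thm:simpleness}\,(1) forces $K$ to be either a field or $\B$. For the converse I would treat the two coefficient cases separately, using throughout the standing hypotheses that $S_{G,E}$ satisfies Condition\,(S) and that $\mcG_{G,E}$ is minimal.

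When $K$ is a field, $A_K(\mcG_{G,E})$ is a ring and its congruences coincide with its two-sided ideals, so congruence-simpleness is the same as simpleness. Under Condition\,(S) and minimality the latter holds by \cite[Th.~5.11\,(3)]{cepss:soaatnhg}, which settles the field case.

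The substantive case is $K = \B$, where I would verify the two hypotheses of Corollary~\ref{sim-cor}. Minimality is assumed. For effectiveness I would invoke Condition\,(S) in the single-element case: were $\mcG_{G,E}$ not effective, then by the argument in the proof of Proposition~\ref{Neccondprop} there would be a nonempty basic bisection $\Theta(\alpha,g,\beta;Z(\gamma))$ contained in the isotropy bundle but not in $\mcGo$. Writing $s \defeq (\alpha,g,\beta)$, containment in the isotropy forces $\sigma_s$ to fix every base point, so $Z(\gamma) \subseteq F_s$; while failure of containment in $\mcGo$ produces a base point $\xi \in Z(\gamma)$ whose germ is not a unit, that is $\xi \notin T\!F_s$. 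Then $\xi \in (F_s \setminus T\!F_s) \cap (F_s)^{\circ}$ contradicts Condition\,(S). It then remains only to show that every compact open subset of $\mcG_{G,E}$ is regular open, after which Corollary~\ref{sim-cor} delivers congruence-simpleness of $A_{\B}(\mcG_{G,E})$.

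For the regular-open property I would write a compact open set as a finite union $U = \bigcup_{i=1}^n \Theta(s_i;Z(\gamma_i))$ of basic bisections and analyze the boundary $\overline{U} \setminus U$ in the groupoid of germs. The key claim is that any germ $[t;\eta] \in \overline{U} \setminus U$ is forced, via the germ-equivalence, to sit over a base point $\eta$ that is a singular fixed point for some of the $s_i$, i.e.\ $\eta \in F_{s_i} \setminus T\!F_{s_i}$: indeed $[s_i;\eta_k] \to [t;\eta]$ requires $s_i$ and $t$ to share a germ on a punctured neighbourhood of $\eta$ while differing at $\eta$ itself. Condition\,(S), applied to the finite family $\{s_1,\dots,s_n\}$, then precisely forbids such $\eta$ from lying in $(\bigcup_{i} F_{s_i})^{\circ}$, which I would use to conclude $(\overline{U})^{\circ} \subseteq U$, so that $U$ is regular open. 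The main obstacle is exactly this closure computation: since characteristic functions of compact open bisections may be discontinuous in the non-Hausdorff groupoid, one has to track carefully how the germ-equivalence collapses distinct triples $(\alpha,g,\beta)$ over limiting base points and match the resulting boundary set against the sets $F_{s_i}$ and $T\!F_{s_i}$. This step is the $\B$-coefficient analog of the regular-open lemma underlying \cite[Th.~5.11\,(3)]{cepss:soaatnhg}, and I would follow that argument, replacing field-linear combinations of characteristic functions by the union/intersection Boolean operations of Remark~\ref{SAg-Bool-rem}\,(2).
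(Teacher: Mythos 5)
Your proposal is correct and follows essentially the same route as the paper: the forward direction from Theorem~\ref{thm:simpleness}, the field case from \cite[Th.~5.11\,(3)]{cepss:soaatnhg}, and the $\B$ case by feeding minimality, effectiveness and the regular-open property into Corollary~\ref{sim-cor}. The only difference is that the paper cites ready-made results for the two steps you sketch by hand — effectiveness follows from Condition\,(S) with $n=1$ via topological freeness \cite[Def.~4.1, Th.~4.7]{ep:ttgoais}, and the regular-open property is exactly \cite[Lem.~5.6]{cepss:soaatnhg}, a purely topological statement about $\mcG_{G,E}$ involving no coefficients at all, so the ``$\B$-coefficient analog'' closure computation you flag as the main obstacle is unnecessary and the lemma can be invoked verbatim.
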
	

\begin{proof} Applying Condition\,(S) for $n = 1$ yields $F_s^{\circ} \subseteq T\!F_s$ for all $s \in S$, whence $\mcG_{G, E}$ is topologically free as in \cite[Def.~4.1]{ep:ttgoais} and thus effective by \cite[Th.~4.7]{ep:ttgoais}.
Now by \cite[Lem.~5.6]{cepss:soaatnhg}, Condition\,(S) implies that any compact open subset of $\mcG_{G, E}$ is regular open.
Then, by \cite[Th.~5.11\,(3)]{cepss:soaatnhg} and Corollary~\ref{sim-cor}, we immediately obtain the result. \end{proof}	

We remark that \cite[Lem.~5.9]{cepss:soaatnhg} gives a criterion for verifying Condition\,(S) more readily in this context, and the minimality assumption in Theorem~\ref{sim-ssgalg} is satisfied in many cases; see \cite[Th.~13.6]{ep:ssgautokanca}.
By using Theorem~\ref{sim-ssgalg} and Corollary~\ref{sim-cor}, we are able to provide congruence-simple Steinberg algebras of certain non-Hausdorff ample groupoids over the Boolean semifield~$\B$.

Consider first the self-similar graph action of the Katsura algebra specified by the $3 \!\times\! 3$-matrices of Example~\ref{exas:self-similar}\,(3).  Denote by $\mcG_{\Z, E_A}$ and $S_{\Z, E_A}$ its associated groupoid and inverse semigroup, respectively.
Then~$\mcG_{\Z, E_A}$ is minimal and non-Hausdorff, by \cite[Lem.~5.12]{cepss:soaatnhg}, and~$S_{\Z, E_A}$ satisfies Condition\,(S), by \cite[Lem.~5.15]{cepss:soaatnhg}.  Moreover, $A_K(\mcG_{\Z, E_A})$ is simple for all fields~$K$ by \cite[Th.~5.16 (2)]{cepss:soaatnhg}.
Using these observations, Corollary~\ref{sim-cor}, and Theorem~\ref{sim-ssgalg}, we immediately obtain the following result.

\begin{cor}[{cf.~\cite[Th.~5.16 (2)]{cepss:soaatnhg}}]\label{sim-Katalg}
  For any semifield~$K$, the algebra $A_K(\mcG_{\Z, E_A})$ is congruence-simple if and only if~$K$ is either a field or the Boolean semifield~$\B$.
\end{cor}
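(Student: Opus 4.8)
The plan is to recognize Corollary~\ref{sim-Katalg} as a direct specialization of Theorem~\ref{sim-ssgalg} to the self-similar graph $(\Z, E_A)$ arising from the Katsura matrices of Example~\ref{exas:self-similar}\,(3). Theorem~\ref{sim-ssgalg} asserts precisely the desired biconditional under two hypotheses: that $\mcG_{G,E}$ be minimal and that $S_{G,E}$ satisfy Condition\,(S). Both hypotheses have already been recorded above for the present example---minimality of $\mcG_{\Z, E_A}$ from \cite[Lem.~5.12]{cepss:soaatnhg} and Condition\,(S) for $S_{\Z, E_A}$ from \cite[Lem.~5.15]{cepss:soaatnhg}. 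Thus I would simply instantiate Theorem~\ref{sim-ssgalg} with $G = \Z$ and $E = E_A$ and read off the conclusion.

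To make the two implications fully explicit, I would unwind the argument as follows. For the forward direction, congruence-simplicity of $A_K(\mcG_{\Z, E_A})$ over a semifield~$K$ forces $K$ to be a field or the Boolean semifield~$\B$ by Proposition~\ref{Neccondprop}\,(1). For the converse I would split into two cases. When $K$ is a field, the algebra $A_K(\mcG_{\Z, E_A})$ is simple as a ring by \cite[Th.~5.16 (2)]{cepss:soaatnhg}, and for rings congruence-simplicity coincides with the usual notion of simplicity (the congruence lattice being isomorphic to the ideal lattice), so congruence-simplicity follows at once. When $K = \B$, I would invoke Corollary~\ref{sim-cor}: its two hypotheses, namely that $\mcG_{\Z, E_A}$ is minimal and effective and that every compact open subset is regular open, are both supplied by Condition\,(S)---effectiveness via topological freeness (applying Condition\,(S) at $n = 1$ together with \cite[Th.~4.7]{ep:ttgoais}) and regular-openness via \cite[Lem.~5.6]{cepss:soaatnhg}, exactly as in the proof of Theorem~\ref{sim-ssgalg}.

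There is no genuine obstacle internal to this corollary: once Theorem~\ref{sim-ssgalg} is available and the two structural facts about $(\Z, E_A)$ are in hand, the statement is immediate. The substantive work lies entirely upstream---principally in verifying Condition\,(S) for the specific Katsura semigroup $S_{\Z, E_A}$ (the content of \cite[Lem.~5.15]{cepss:soaatnhg}) and in establishing minimality of the associated non-Hausdorff groupoid (the content of \cite[Lem.~5.12]{cepss:soaatnhg}). Accordingly, the only point I would take care to check is that the hypotheses match correctly; in particular, the non-Hausdorff nature of $\mcG_{\Z, E_A}$ poses no difficulty, since both Theorem~\ref{sim-ssgalg} and Corollary~\ref{sim-cor} are stated for general, possibly non-Hausdorff, ample groupoids.
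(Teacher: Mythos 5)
Your proposal is correct and follows essentially the same route as the paper: the paper likewise derives the corollary by combining minimality of $\mcG_{\Z, E_A}$ (\cite[Lem.~5.12]{cepss:soaatnhg}) and Condition\,(S) for $S_{\Z, E_A}$ (\cite[Lem.~5.15]{cepss:soaatnhg}) with Theorem~\ref{sim-ssgalg} and Corollary~\ref{sim-cor}, citing \cite[Th.~5.16 (2)]{cepss:soaatnhg} for the field case. Your explicit unwinding (Proposition~\ref{Neccondprop} for necessity, the ring-simplicity identification for fields, and Corollary~\ref{sim-cor} for $K = \B$) matches exactly what the proof of Theorem~\ref{sim-ssgalg} packages together.
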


Next consider a self-similar group action $(G, X)$ as in Example~\ref{exas:self-similar}\,(2), in which a group~$G$ acts on the edge set $X \defeq E^1$ of a one-vertex graph.
Then its associated groupoid, which we denote by~$\mcG_{G, X}$, is minimal and effective; see \cite[Sec.~17]{ep:ssgautokanca}.

Following \cite[Def.~5.19]{cepss:soaatnhg}, the self-similar action is called \emph{$\omega$-faithful} if whenever $\xi \in X^{\infty}$ and $g_1, \dots, g_n \in G$ satisfy $g_i . \gamma = \gamma$ and $\phi(g_i, \gamma) \ne 1_G$ for all~$i$ and all finite prefixes~$\gamma$ of~$\xi$,
then there is some $m \in \mathbb N$ such that for every finite prefix~$\gamma$ of~$\xi$ of length $|\gamma| \ge m$ there exists $\alpha \in X^*$ with $\phi(g_i, \gamma) . \alpha \ne \alpha$.
In \cite[Lem.~5.20]{cepss:soaatnhg}, the authors showed that $S_{G, X}$ satisfies Condition\,(S) for all $\omega$-faithful self-similar actions.  Moreover, $A_K(\mcG_{G, X})$ is simple for all $\omega$-faithful self-similar actions and for all fields~$K$, by \cite[Th.~5.21\,(3)]{cepss:soaatnhg}.
By employing these results, Corollary~\ref{sim-cor}, and Theorem~\ref{sim-ssgalg}, we immediately obtain the following.

\begin{cor}[{cf.~\cite[Th.~5.21\,(3)]{cepss:soaatnhg}}]\label{sim-ssacalg}
  For any semifield~$K$ and any $\omega$-faithful self-similar action $(G, X)$, the algebra $A_K(\mcG_{G, X})$ is congruence-simple if and only if~$K$ is either a field or the Boolean semifield~$\B$.
\end{cor}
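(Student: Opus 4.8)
The plan is to recognize this corollary as a direct specialization of Theorem~\ref{sim-ssgalg} to the one-vertex case, so that the entire task reduces to verifying the two standing hypotheses of that theorem for an $\omega$-faithful self-similar group action. First I would observe that a self-similar group action $(G, X)$ in the sense of Example~\ref{exas:self-similar}\,(2) is precisely a self-similar graph $(G, E, \sigma, \phi)$ in which $E^0 = \{ v \}$ is a single vertex and $E^1 = X$. Consequently the associated inverse semigroup $S_{G, X}$, its tight groupoid $\mcG_{G, X}$, and the Steinberg algebra $A_K(\mcG_{G, X})$ all fit verbatim into the framework of Section~\ref{sec:examples}, and Theorem~\ref{sim-ssgalg} becomes applicable as soon as its two hypotheses are checked.

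Next I would verify those hypotheses in turn. For minimality of $\mcG_{G, X}$, I would simply invoke the analysis of self-similar group actions in \cite[Sec.~17]{ep:ssgautokanca}, which guarantees that $\mcG_{G, X}$ is minimal (and in fact effective) for every such action, with no additional assumption. For Condition\,(S), I would appeal to the $\omega$-faithfulness hypothesis: by \cite[Lem.~5.20]{cepss:soaatnhg}, every $\omega$-faithful self-similar action has $S_{G, X}$ satisfying Condition\,(S). With minimality and Condition\,(S) both in hand, Theorem~\ref{sim-ssgalg} applies directly and yields at once that $A_K(\mcG_{G, X})$ is congruence-simple if and only if~$K$ is a field or the Boolean semifield~$\B$, so that both directions of the stated equivalence follow simultaneously.

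Since all the substantive work is absorbed into Theorem~\ref{sim-ssgalg}---which internally routes the field case through \cite[Th.~5.11\,(3)]{cepss:soaatnhg} and the $\B$-case through Corollary~\ref{sim-cor}---there is no genuine computational obstacle here. The only point requiring care is to confirm that $\omega$-faithfulness is exactly the property that \cite[Lem.~5.20]{cepss:soaatnhg} converts into Condition\,(S), and that the one-vertex specialization does not disturb the minimality and effectiveness claimed in \cite[Sec.~17]{ep:ssgautokanca}. Once that correspondence is checked, the corollary follows immediately, mirroring the way \cite[Th.~5.21\,(3)]{cepss:soaatnhg} deduces ordinary ring-simplicity over fields from the same input data.
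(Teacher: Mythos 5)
Your proposal is correct and follows essentially the same route as the paper: both verify minimality of $\mcG_{G,X}$ via \cite[Sec.~17]{ep:ssgautokanca} and Condition\,(S) via $\omega$-faithfulness and \cite[Lem.~5.20]{cepss:soaatnhg}, then invoke Theorem~\ref{sim-ssgalg}. The only cosmetic difference is that the paper additionally cites \cite[Th.~5.21\,(3)]{cepss:soaatnhg} and Corollary~\ref{sim-cor} explicitly, whereas you correctly note these are already absorbed into the proof of Theorem~\ref{sim-ssgalg}.
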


For instance, one easily sees that the 2-odometer of Example~\ref{exas:self-similar}\,(2) is $\omega$-faithful, since if $g . \gamma = \gamma$ for all finite prefixes~$\gamma$ of some $\xi \in X^{\infty}$ we must have $g = 1_G$ and thus $\phi(g, \gamma) = 1_G$, for any $g \in G = \langle a \rangle$.

On the other hand, the Grigorchuk group $G = \langle a, b, c, d \rangle$ of Example~\ref{exas:self-similar}\,(2) is not $\omega$-faithful, which can be checked by considering $\xi = 1 1 1 \ldots \in X^{\infty}$ and $b, c, d \in G$.
In fact, it has been shown that the Steinberg algebra of the tight groupoid associated to the Grigorchuk group is simple over a ground field of characteristic zero (see \cite[Th.~5.22]{cepss:soaatnhg}), but not over the field~$\mathbb F_2$ (cf.\ \cite[Ex.~4.5]{n:goegasa} and \cite[Cor.~5.26]{cepss:soaatnhg}).
In light of this interesting observation we conclude the article with the following question.

\begin{op} Is the Steinberg algebra of the tight groupoid associated to the Grigorchuk group over the Boolean semifield congruence-simple? \end{op}

\end{document}